\numberwithin{equation}{section}
\newcommand{\margnote}[1]{
\ifthenelse{\boolean{shownotes}}%
{\marginpar{\raggedright\tiny\texttt{#1}}}%
{}%
}
\newcommand{\hole}[1]{
\ifthenelse{\boolean{shownotes}}%
{\begin{center} \fbox{ \rule {.25cm}{0cm}
\rule[-.1cm]{0cm}{.4cm} \parbox{.85\textwidth}{\begin{center}
\texttt{#1}\end{center}} \rule {.25cm}{0cm}}\end{center}}
{}
}
\theoremstyle{plain}
\newtheorem{lemma}{Lemma}[section]
\newtheorem{theorem}[lemma]{Theorem}
\newtheorem{proposition}[lemma]{Proposition}
\theoremstyle{definition}
\newtheorem{remark}[lemma]{Remark}
\newtheorem{definition}[lemma]{Definition}
\theoremstyle{remark}
\newcommand{\Id}{\mathrm{Id}}
\newcommand{\F}{\mathbb{F}}
\newcommand{\R}{\mathbb{R}}
\newcommand{\C}{\mathbb{C}}
\newcommand{\Z}{\mathbb{Z}}
\newcommand{\N}{\mathbb{N}}
\newcommand{\bbS}{\mathbb{S}}
\newcommand{\bp}{\overline{p}}
\newcommand{\bq}{\overline{q}}
\newcommand{\teta}{\widetilde{\eta}}
\newcommand{\tzet}{\widetilde{\zeta}}
\newcommand{\cT}{{\mathcal{T}}}
\newcommand{\cL}{{\mathcal{L}}}
\newcommand{\vep}{\varepsilon}
\renewcommand{\Re}{\mathrm{Re}\,} 
\renewcommand{\Im}{\mathrm{Im}\,}
\newcommand{\ep}{\epsilon}
\newcommand{\rr}{\sqrt{r}}
\newcommand{\ess}{\sigma_\mathrm{\tiny{ess}}}
\newcommand{\ptsp}{\sigma_\mathrm{\tiny{pt}}}
\newcommand{\Ldper}{L^2_\mathrm{\tiny{per}}}
\newcommand{\Hdper}{H^2_\mathrm{\tiny{per}}}
\newcommand{\Htper}{H^3_\mathrm{\tiny{per}}}
\newcommand{\Hmper}{H^m_\mathrm{\tiny{per}}}
\newcommand{\<}{\langle}
\renewcommand{\>}{\rangle}
\begin{document}

\title[Instability of periodic waves for the KdV-Burgers equation]{Instability of periodic waves for the Korteweg-de Vries-Burgers equation with monostable source}

\author[R. Folino]{Raffaele Folino}
 
\address{{\rm (R. Folino)} Departamento de Matem\'aticas y Mec\'anica\\Instituto de 
Investigaciones en Matem\'aticas Aplicadas y en Sistemas\\Universidad Nacional Aut\'onoma de 
M\'exico\\ Circuito Escolar s/n, Ciudad Universitaria, C.P. 04510\\Cd. de M\'{e}xico (Mexico)}

\email{folino@aries.iimas.unam.mx}

\author[A. Naumkina]{Anna Naumkina}

\address{{\rm (A. Naumkina)} Departamento de Matem\'aticas y Mec\'anica\\Instituto de 
Investigaciones en Matem\'aticas Aplicadas y en Sistemas\\Universidad Nacional Aut\'onoma de 
M\'exico\\ Circuito Escolar s/n, Ciudad Universitaria, C.P. 04510\\Cd. de M\'{e}xico (Mexico)}

\email{naumkinaanna75@gmail.com}

\author[R. G. Plaza]{Ram\'on G. Plaza}

\address{{\rm (R. G. Plaza)} Departamento de Matem\'aticas y Mec\'anica\\Instituto de 
Investigaciones en Matem\'aticas Aplicadas y en Sistemas\\Universidad Nacional Aut\'onoma de 
M\'exico\\ Circuito Escolar s/n, Ciudad Universitaria, C.P. 04510\\Cd. de M\'{e}xico (Mexico)}

\email{plaza@aries.iimas.unam.mx}

\begin{abstract}
In this paper, it is proved that the KdV-Burgers equation with a monostable source term of Fisher-KPP type has small-amplitude periodic traveling wave solutions with finite fundamental period. These solutions emerge from a subcritical local Hopf bifurcation around a critical value of the wave speed. Moreover, it is shown that these periodic waves are spectrally unstable as solutions to the PDE, that is, the Floquet (continuous) spectrum of the linearization around each periodic wave intersects the unstable half plane of complex values with positive real part. To that end, classical perturbation theory for linear operators is applied in order to prove that the spectrum of the linearized operator around the wave can be approximated by that of a constant coefficient operator around the zero solution, which intersects the unstable complex half plane.
\end{abstract}

\keywords{KdV-Burgers-Fisher equation, periodic waves, Hopf bifurcation, Floquet spectrum, spectral instability}

\subjclass[2020]{35Q53, 35B10, 35B35, 35P05, 47A75.}

\maketitle

\setcounter{tocdepth}{1}



\section{Introduction}
\label{secintro}

The Korteweg-de Vries-Burgers (KdVB) equation,
\[
u_t + \alpha uu_x - \mu u_{xx} + \beta u_{xxx} = 0,
\]
with $\mu > 0$, $\beta > 0$, was first derived by Su and Gardner \cite{SuGa69} for a wide class of Galilean invariant physical systems under the weak nonlinearity and long wavelength approximations, resulting into a model equation which underlies a balance between both dispersive and dissipative effects. It has been used to describe the approximate behavior of many physical phenomena such as weakly nonlinear plasma waves with electron inertial effects \cite{HuPN72}, undular bores in shallow water \cite{Jo72}, cascading processes in turbulence theory \cite{LiuLiu92}, wave propagation in an elastic tube filled with a viscous fluid \cite{Jo70}, the flow of liquids containing gas bubbles \cite{Wijn72}, waves in electronegative plasma \cite{Gao22,AAA14} and the description of gravity waves in atmospheric dynamics \cite{Jo97}, among many others. A combination of the (viscous) Burgers \cite{Bur48} and the Korteweg-de Vries \cite{KdV1895} equations, the KdVB model is perhaps the simplest physically relevant scalar equation containing a nonlinear transport or convection term, $uu_x$, a dissipation or viscosity term, $u_{xx}$, and a dispersive term, $u_{xxx}$, into one single equation. In all the aforementioned applications, the study of traveling wave solutions is a fundamental issue. The literature on nonlinear wave propagation for the KdVB equation is very extensive and we do not pretend to make a review here. The reader is referred to the classical review paper by Jeffrey and Kakutani \cite{JeKa72} and to the references cited therein.

In this paper, we consider the KdVB equation with a monostable source term, also known as the Korteweg-de Vries-Burgers-Fisher (KdVBF) equation \cite{Koc20},
\begin{equation}
\label{KdVBF}
u_t + \alpha uu_x - \mu u_{xx} + \beta u_{xxx} = ru(1-u),
\end{equation}
describing the dynamics of a real, scalar unknown quantity $u = u(x,t)$, depending on space, $x \in \R$, and time, $t > 0$, and where $\alpha, \beta, \mu, r \geq 0$ are non-negative constants. Since we are interested in the interplay between dissipation, dispersion, nonlinear transport and reaction, we assume that all physical constants are positive, $\alpha, \beta, \mu, r > 0$. Under this assumption and without loss of generality, we normalize the model by making the transformations
\[
x \to \frac{\mu}{\beta} x, \quad t \to \frac{\mu^3}{\beta^2} t, \quad \alpha \to \frac{\mu^4}{\beta^3} \alpha, \quad r \to \frac{\beta^2}{\mu^3} r,
\]
in order to recast equation \eqref{KdVBF} as
\begin{equation}
\label{KdVBFn}
u_t + \alpha uu_x + u_{xxx} = u_{xx} + ru(1-u),
\end{equation}
for some fixed constants $\alpha > 0$ and $r > 0$; this is the model equation that we work with for the rest of the paper. 

The KdVBF equation \eqref{KdVBF} was recently introduced by Ko\c{c}ak \cite{Koc20}, who proved the existence of traveling waves (such as solitons, kink and anti-kink waves) under the presence of the reaction term
\[
f(u) = ru(1-u),
\]
also known as a logistic reaction function. This particular form of the reaction term was considered by Fisher \cite{Fis37} and by Kolomogorov, Petrovsky and Piskunov \cite{KPP37} in the context of reaction-diffusion equations modelling invasion fronts, and it is often used to model dynamics of populations with limited resources which saturate into a stable equilibrium point associated to an intrinsic carrying capacity (in this case, the equilibrium state $u = 1$). Although the nomenclature is not uniform, it is often known under the names of monostable, logistic or Fisher-KPP reaction term. It is one of the most widely used production mechanisms giving rise to wave propagation and determining their dynamics. Other studies have also considered the KdVB equation with sources: Gao \cite{Gao22}, for instance, derived the KdVB equation with a linear reaction term (a damping source) in the study of waves in electronegative plasma. We conjecture that the KdVBF equation might be derived in the long wavelength and weakly nonlinear approximations from a physically significant system, albeit such derivation is, as far as we know, still unknown.

The purpose of this paper is to contribute to the analysis of nonlinear waves for the KdVBF equation \eqref{KdVBF}. We study for the first time the problem of existence of periodic waves. Instead of applying the common methods based on exploiting symmetries, or on solving the differential profile equations on a case-by-case basis, we prove the existence of periodic waves via application of local bifurcation methods \cite{ZLZ13,AlPl21,AMP22,CheDu22}. Profiting from the presence of the unstable equilibrium point of the reaction function, we prove that a family of small-amplitude periodic waves emerges from a subcritical local Hopf bifurcation around a critical value of the wave speed; this is the first result of the paper, see Theorem \ref{thmexist} below. These waves have small amplitude and bounded fundamental period. The amplitudes are of order of the square root of the distance between the speed and the critical speed. The instability of the equilibrium point of the reaction is responsible for the existence of the waves and for the change of stability of the equilibrium point as the wave speed crosses the bifurcation critical value. Some numerical approximations illustrate the emergence of this family of waves.

Next, we study the stability of these bounded periodic waves as solutions to the nonlinear evolution equation \eqref{KdVBFn}. As it is customary, we first linearize the equation around each periodic wave and study the resulting spectral problem. In the case of periodic waves, the linearized operator has periodic coefficients, leading to the concept of the \emph{Floquet spectrum}, see \cite{KaPro13,JMMP14,Grd1}. We then recast the problem of locating the continuous or Floquet spectrum as a point spectral problem on an appropriate periodic space via a Bloch-type transformation. Since the waves have small amplitude, we follow previous analyses \cite{AlPl21,AMP22,CheDu22,KDT19} and prove that the spectrum can be approximated by the spectrum of a constant coefficient operator around the zero solution. For that purpose, a key ingredient is to show that the perturbed problem encompasses relatively bounded perturbations. The analysis is based on recasting the spectral equations for the Bloch-type operators as a perturbation problem, where the perturbation parameter is precisely the distance between the speed and the critical speed. We then apply the classical perturbation theory for linear operators (cf. \cite{Kat80,HiSi96}) to show that the unstable point eigenvalue of the unperturbed operator splits into neighboring curves of Floquet spectra of the underlying small amplitude waves, proving in this fashion the spectral instability result, which is the main result of the paper, see Theorem \ref{mainthm} below. Up to our knowledge, this is the first analysis of existence and stability properties of periodic waves for the KdVBF equation \eqref{KdVBFn}.

\subsection*{On notation}
We denote the real and imaginary parts of a complex number $\lambda \in \C$ by $\Re\lambda$ and $\Im\lambda$, respectively, as well as complex conjugation by ${\overline{\lambda}}$. 
Standard Sobolev spaces of complex-valued functions on the real line will be denoted as $L^2(\R)$ and $H^m(\R)$, with $m \in \N$, endowed with the standard inner products and norms. For any fundamental period $L > 0$, we denote by $\Ldper([0,L])$ the Hilbert space of complex $L$-periodic functions in $L^2_\mathrm{\tiny{loc}}(\R)$ satisfying
$u(x + L) = u(x)$ a.e. in $x$ and with inner product and norm given by
\[
\< u, v \>_{\Ldper} = \int_0^{L} u(x) \overline{v(x)} \, dx, \qquad \| u \|^2_{\Ldper} = \< u, u \>_{\Ldper}.
\]
For any $m \in \N$, the periodic Sobolev space $\Hmper([0,L])$ will denote the set of all functions $u \in \Ldper([0,L])$ whose weak derivatives up to order $m$ belong to $\Ldper([0,L])$. Their standard inner product and norm are given by $\< u,v \>_{\Hmper} = \sum_{j=0}^m \< \partial_x^j u, \partial_x^j v\>_{\Ldper}$ and $\|u\|_{\Hmper}^2 = \< u,u\>_{\Hmper}$, respectively.

%
%
%
%
%
%
%
%
%

\section{Small-amplitude periodic waves}
\label{secexistence}

This section is devoted to prove that small amplitude periodic wave solutions to equation \eqref{KdVBFn} do exist. These waves emerge from a Hopf bifurcation thanks to the the presence of an unstable equilibrium point of the reaction function.

\subsection{Existence of periodic waves}

A periodic wave is a solution to \eqref{KdVBFn} of the form
\begin{equation}
\label{TWS}
u(x,t) = \varphi(x - ct),
\end{equation}
where the profile function, $\varphi = \varphi(\xi)$, $\varphi : \R \to \R$, is at least of class $C^3$ and periodic, with fundamental period $L > 0$, that is, $\varphi(\xi +L) = \varphi(\xi)$ for all $\xi \in \R$. Here $\xi = x-ct$ denotes the Galilean variable of translation, and $c \in \R$ is the wave speed. Upon substitution of the \emph{ansatz} \eqref{TWS} into \eqref{KdVBFn} one obtains the differential profile equation,
\begin{equation}
\label{profileq}
-c \varphi' + \alpha \varphi \varphi' + \varphi''' = \varphi'' + r\varphi(1-\varphi),
\end{equation}
where $' = d/d\xi$.

In order to rewrite \eqref{profileq} as a first order system, let us define
\[
\Phi = \begin{pmatrix} \phi_1 \\ \phi_2 \\ \phi_3 \end{pmatrix} := \begin{pmatrix} \varphi \\ \varphi' \\ \varphi'' \end{pmatrix},
\]
so that
\begin{equation}
\label{firstordsyst}
\Phi' = F(\Phi) := \begin{pmatrix} \phi_2 \\ \phi_3 \\ \phi_3 + r\phi_1(1-\phi_1) - \alpha \phi_1 \phi_2 + c \phi_2 \end{pmatrix}.
\end{equation}
Notice that $F \in C^\infty(\R^3;\R^3)$ and the only equilibrium points in $\R^3$ of system \eqref{firstordsyst} are $P_0 = (0,0,0)$ and $P_1 = (1,0,0)$. The Jacobian of $F$ is given by
\[
A(\Phi) := D_\Phi F (\Phi) = \begin{pmatrix} 0 & 1 & 0 \\ 0 & 0 & 1 \\ r(1-2\phi_1)-\alpha \phi_2 & c - \alpha \phi_1 & 1 \end{pmatrix} \in C^\infty(\R^3;\R^{3\times 3}).
\]
The Jacobian evaluated at $P_0$ is
\[
A(P_0) = \begin{pmatrix} 0 & 1 & 0 \\ 0 & 0 & 1 \\ r & c  & 1 \end{pmatrix}.
\]
Therefore, its characteristic equation is given by
\begin{equation}\label{charA0}
Q(\lambda,c) := \det \big(\lambda I - A(P_0)\big) = \lambda^2(\lambda -1) - \lambda c - r = 0.
\end{equation}
For fixed $r > 0$, the complex roots of equation \eqref{charA0} depend on $c \in \R$ and are denoted as $\lambda_j = \lambda_j(c)$, $j =1,2,3$. In our setting, the speed value $c \in \R$ will play the role of the bifurcation parameter. Notice that, evaluated at the critical speed
\begin{equation}
\label{critspeed} c_0 := -r, 
\end{equation}
the characteristic polynomial is $Q(\lambda, -r) = (\lambda-1)(\lambda^2 + r)$ and its roots are
\[
\lambda_1(c_0) = 1, \qquad \lambda_2(c_0) = i \sqrt{r}, \qquad \lambda_3(c_0) = -i \sqrt{r}.
\]

If we denote $\eta(c) = \Re \lambda(c)$ and $\zeta(c) = \Im \lambda(c)$ as the real and imaginary parts of any root $\lambda(c)$ of \eqref{charA0}, respectively, then upon substitution into \eqref{charA0} we have
\[
(\eta + i \zeta)^2 (\eta -1 + i \zeta) - c (\eta + i \zeta) - r = 0,
\]
for all $c \in \R$. Taking the real and imaginary parts of last equation yields $G(\eta,\zeta,c) = 0$, with $G : \R^3 \to \R^2$, $G = (G_1, \, G_2)$, and
\[
\begin{aligned}
G_1(\eta,\zeta,c) &= (\eta^2 - \zeta^2)(\eta-1) - 2 \eta \zeta^2 -c \eta - r = 0,\\
G_2(\eta,\zeta,c) &= \zeta(\eta^2 - \zeta^2) + 2 \eta \zeta (\eta-1) - c \zeta = 0.
\end{aligned}
\]

Let us denote $(\eta_0, \zeta_{0j}, c_0) := (\Re \lambda_j(c_0), \Im \lambda_j(c_0), c_0) = (0, (-1)^j \sqrt{r}, -r)$, for $j = 2,3$. Then, clearly, $G_1(\eta_0, \zeta_{0j}, c_0) = r-r = 0$ and 
\[
G_2(\eta_0, \zeta_{0j}, c_0) = -(-1)^j r^{3/2} + (-1)^j r^{3/2} = 0.
\]
Moreover, the Jacobian matrix of $G$ with respect to the $(\eta,\zeta)$-variables is
\[
D_{(\eta,\zeta)} G = \begin{pmatrix} 3(\eta^2 - \zeta^2) - 2 \eta -c & 2\zeta - 6 \eta \zeta \\ -2\zeta + 6 \eta \zeta & 3(\eta^2 - \zeta^2) - 2 \eta -c \end{pmatrix}.
\]
%
Hence, evaluating this Jacobian at $(\eta_0, \zeta_{0j}, c_0)$ we obtain
\[
J_0 := \big( D_{(\eta,\zeta)} G \big) |_{(\eta_0, \zeta_{0j}, c_0)} = \begin{pmatrix} -2r & (-1)^j 2 \sqrt{r} \\ -(-1)^j 2 \sqrt{r} & -2r \end{pmatrix}, \qquad j =2,3.
\]
Notice that $\det J_0 = 4r(r+1) > 0$. Henceforth, by the Implicit Function Theorem there exist local functions
\[
(\teta_j, \tzet_j)(c) = (\Re \lambda_j(c), \Im \lambda_j(c)),
\]
defined on a neighborhood of $c_0$ such that $\teta_j(c_0) = \eta_0$, $\tzet_j(c_0) = \zeta_{0j}$, and there holds $G(\teta_j(c), \tzet_j(c), c) = 0$ for all $c$ in such neighborhood of  $c_0$. Moreover, $(\teta_j, \tzet_j)$ are of class $C^1$ and their derivatives are given by
\[
\frac{d}{dc} \begin{pmatrix} \teta_j \\ \tzet_j \end{pmatrix} = - \left. \left[ (D_{(\eta,\zeta)} G)^{-1} \begin{pmatrix} \partial_c G_1 \\ \partial_c G_2 \end{pmatrix} \right] \right|_{(\teta_j(c), \tzet_j(c), c)}.
\]
Since $\partial_c G_1 = - \eta$ and $\partial_c G_2 = - \zeta$ we readily obtain
\[
\begin{aligned}
\left. {\frac{d}{dc} \begin{pmatrix} \teta_j \\ \tzet_j \end{pmatrix}} \right|_{c_0} =  J_0^{-1} \begin{pmatrix} \eta_0 \\ \zeta_{0j} \end{pmatrix} 
&= \frac{1}{4r(r+1)} \begin{pmatrix} -2r & -(-1)^j 2 \sqrt{r} \\ (-1)^j 2 \sqrt{r} & -2r \end{pmatrix} \begin{pmatrix} 0 \\ (-1)^j \sqrt{r} \end{pmatrix} \\
& = \frac{-1}{2(r+1)} \begin{pmatrix} 1 \\ (-1)^j \sqrt{r}\end{pmatrix}, \qquad j =2,3.
\end{aligned}
\]
This yields
\begin{equation}
\label{derrelam}
d_0 := \left. \frac{d}{dc} \big(\Re \lambda_j(c)\big) \right|_{c_0} =  \frac{-1}{2(r+1)} < 0, \qquad j=2,3.
\end{equation}

To sum up, for fixed physical parameter values, $\alpha > 0$ and $r > 0$, the first order system \eqref{firstordsyst} is, with a slight abuse of notation, of the form $\Phi' = F(\Phi,c)$ where $c \in \R$ is interpreted as a bifurcation parameter, $F$ is smooth in both variables and it satisfies that, when evaluated at $(P_0, c_0)$ (where $c_0 = -r$ is the critical bifurcation parameter), the Jacobian $A_0 := (D_\Phi F)|_{(P_0,c_0)}$ has two simple purely imaginary eigenvalues, $\lambda_2(c_0)$ and $\lambda_3(c_0)$, with $\lambda_j(c_0) = (-1)^j i \, \omega_0$, for $j=2,3$ and $\omega_0 := \sqrt{r} > 0$, and no other eigenvalues with non zero real parts. Moreover, the non-degeneracy condition \eqref{derrelam} holds, yielding $\Re \lambda_j(c) < 0$ for $c > c_0$. Finally, it can be verified that the first Lyapunov coefficient is strictly positive, $a_0 = \ell_1(0) > 0$, and the genericity condition also holds (see Appendix \ref{apendice}).

Upon application of Hopf's bifurcation Theorem (see Theorem 3.15 in \cite{MaMcC76}, p. 81, or Theorem 3.4.2 in \cite{GuHo83}, p. 151) we conclude that a local Hopf bifurcation occurs at $c = c_0$ and we deduce the existence of $0 < \ep_0 \ll 1$ sufficiently small such that the system has a unique (up to translations) family of closed periodic orbit solutions, $\Phi_\ep = \Phi_\ep(\xi)$, for each value of $0 < \ep < \ep_0$. This family is parametrized by speed values of the form
\[
c(\ep) = c_0 + \ep = -r + \ep.
\]
The orbits emerge for values $c > c_0$ because $a_0 d_0 < 0$. Each periodic orbit has a fundamental period given by $L_\ep = 2\pi/\omega_0 + O(\ep) = 2\pi/\sqrt{r} + O(\ep)$ and amplitude growing like $|\Phi_\ep| = O(\sqrt{\ep})$. Moreover, from the regularity of the vector field, it follows that $\Phi_\ep \in C^\infty$. Identifying the first component of each of these periodic orbits as a periodic traveling wave for equation \eqref{KdVBFn}, we have thus proved the following existence result.

\begin{theorem}[existence of small amplitude periodic waves]
\label{thmexist}
For any fixed parameter values $r, \alpha > 0$, there exist $\ep_0 > 0$ sufficiently small and a critical speed value $c_0 = -r$ such that the KdV-Burgers-Fisher equation \eqref{KdVBFn} has a family of smooth periodic traveling wave solutions of the form $u(x,t) = \varphi^\ep(x - c(\ep)t)$, indexed by $\ep\in(0,\ep_0)$,  with fundamental period 
\begin{equation}
\label{period}
L_\ep = \frac{2\pi}{\sqrt{r}} + O(\ep),
\end{equation}
and with amplitude growing like
\begin{equation}
\label{amplitude}
|\varphi^\ep|, |(\varphi^\ep)'| = O(\sqrt{\ep}).
\end{equation}
For any $\ep \in (0,\ep_0)$, the speed of propagation is given by $c(\ep) = -r + \ep$,  and satisfies $c(\ep) \to c_0$ as $\ep \to 0^+$. 
Finally, the periodic wave $\varphi^\ep$ is unique up to translations, for each fixed $\ep \in (0,\ep_0)$.
\end{theorem}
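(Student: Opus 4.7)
The proof plan is essentially to package the computations already carried out in the excerpt into a clean application of the Hopf bifurcation theorem to the planar-like system \eqref{firstordsyst} with $c$ as the bifurcation parameter. First, I would summarize the structural facts established above: the vector field $F(\Phi,c)$ is $C^\infty$ on $\R^3 \times \R$; the origin $P_0$ is an equilibrium for every $c \in \R$; at the critical value $c_0 = -r$, the linearization $A_0$ has characteristic polynomial $(\lambda-1)(\lambda^2 + r)$, so its spectrum consists of a simple real eigenvalue $\lambda_1(c_0) = 1$ and a simple complex conjugate pair $\lambda_{2,3}(c_0) = \pm i \omega_0$, $\omega_0 = \sqrt r$, which is the required non-resonance condition (no other eigenvalues on the imaginary axis, in particular no zero eigenvalue or higher multiplicity).

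Next, I would verify the transversality hypothesis using the Implicit Function Theorem argument already performed above: the function $G(\eta,\zeta,c) = 0$ has non-singular Jacobian $J_0$ at $(0,\pm\sqrt r, -r)$ since $\det J_0 = 4r(r+1) > 0$, so the pair of complex eigenvalues extends to $C^1$ functions $\lambda_{2,3}(c)$ on a neighborhood of $c_0$, and the explicit computation in \eqref{derrelam} gives
\[
\left. \frac{d}{dc}\,\Re \lambda_j(c) \right|_{c=c_0} = \frac{1}{2(r+1)} > 0, \qquad j=2,3,
\]
so that the conjugate pair crosses the imaginary axis transversely from left to right as $c$ increases through $c_0$. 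Together with the spectral picture above, this is exactly the hypothesis of the classical Hopf bifurcation theorem (e.g.\ Theorem 3.15 in \cite{MaMcC76} or Theorem 3.4.2 in \cite{GuHo83}).

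Applying that theorem to \eqref{firstordsyst} yields, for some $\ep_0 > 0$ sufficiently small, a one-parameter family of nontrivial periodic orbits $\Phi_\ep(\xi)$ of the ODE, unique up to phase translation, bifurcating from $P_0$ and parametrized by the distance $\ep = c - c_0$ to the critical speed, with fundamental period $L_\ep = 2\pi/\omega_0 + O(\ep) = 2\pi/\sqrt r + O(\ep)$ and amplitude $|\Phi_\ep| = O(\sqrt \ep)$; since the vector field is $C^\infty$, the orbits are $C^\infty$ as well. Reading off the first component gives a smooth periodic profile $\varphi^\ep$ satisfying $|\varphi^\ep|, |(\varphi^\ep)'| = O(\sqrt \ep)$, which via the ansatz \eqref{TWS} produces the desired periodic travelling wave solution $u(x,t) = \varphi^\ep(x - c(\ep)t)$ of \eqref{KdVBFn} with $c(\ep) = -r + \ep$.

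I do not anticipate a serious obstacle: once the spectral and transversality computations of the excerpt are in place, Hopf's theorem does all the work, and the stated quantitative information on $L_\ep$ and the amplitude is precisely the standard output of the theorem. The only point that requires a small remark is that the parametrization of the family by $\ep = c - c_0$ (rather than by the amplitude itself) is legitimate here because the bifurcation is non-degenerate; the supercritical nature of the branch and the $\sqrt \ep$ scaling of the amplitude are consistent with this linear reparametrization.
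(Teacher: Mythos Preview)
Your proposal is correct and follows essentially the same approach as the paper: the paper verifies exactly the spectral configuration at $c_0=-r$ and the transversality condition \eqref{derrelam} via the Implicit Function Theorem, then invokes the Hopf bifurcation theorem (citing the same references, \cite{MaMcC76} and \cite{GuHo83}) to obtain the family of periodic orbits with the stated period and amplitude asymptotics. Your closing remark on the parametrization by $\ep = c - c_0$ is a nice clarification, but otherwise the argument is line-for-line what the paper does.
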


\begin{remark}
A few remarks are in order. Notice that the linearization at the rest point $P_0$ passes from a unstable hyperbolic focus with $\Re \lambda_j > 0$, $j = 1,2,3$, for $c < c_0$ to having two stable eigenvalues $\Re \lambda_j < 0$, $j = 2,3$ and one unstable $\Re \lambda_1 > 0$ for values $c > c_0$. Therefore, the limit cycle for $c > c_0$ is unstable. When a stable limit cycle surrounds an unstable equilibrium point, the bifurcation is called a supercritical Hopf bifurcation. If the limit cycle is unstable and surrounds a stable equilibrium point, then the bifurcation is called a subcritical Hopf bifurcation (cf. \cite{MaMcC76,Strg15}). Here the equilibrium point is stable only on a two-dimensional normally hyperbolic invariant stable manifold, although there is a permanent unstable eigenvalue ($\lambda_1(c_0) = 1$) near the critical bifurcation point\footnote{Actually, if there is at least one eigenvalue with positive real part then the periodic orbit is always unstable with respect to the full flow in $\R^3$; see Remark 3, p. 20, in Hassard \emph{et al.} \cite{HsKzW81}.}. Hence, this corresponds to a \emph{subcritical} Hopf bifurcation, even though it takes place for values $c > c_0$ (here the direction of the bifurcation is reversed because of the sign of the derivative in \eqref{derrelam}), and the emerging periodic orbits are unstable. This is consistent with the sign of the first Lyapunov coefficient, which is positive. Of course, this notion of stable/unstable periodic orbit refers to the standard concept from dynamical systems theory: an orbit is stable as a solution to system \eqref{firstordsyst} for a constant value of $c$ if any other nearby solution to the system with the same $c$ tends to the orbit under consideration. This definition of stability is completely unrelated to the concept of \emph{spectrally stable periodic wave}, which refers to the dynamical stability of the traveling wave as a solution to the evolution PDE; see \cite{AlPl21,AMP22} for further information and discussions. 
\end{remark}

\subsection{Numerical approximation}

In order to illustrate the emergence of small amplitude periodic waves for the KdVBF equation \eqref{KdVBFn}, let us fix the physical parameters as $r = \alpha = 1$. Then the dynamical system \eqref{firstordsyst} in $\R^3$  now reads
\begin{equation}
\label{firstordsyst2}
\begin{pmatrix} \phi_1 \\ \phi_2 \\ \phi_3\end{pmatrix} ' = \begin{pmatrix} \phi_2 \\ \phi_3 \\ \phi_3 + \phi_1(1-\phi_1) -  \phi_1 \phi_2 + c(\ep) \phi_2 \end{pmatrix},
\end{equation}
where the speed, or bifurcation parameter, is given by $c(\ep) = c_0 + \ep = -1 + \ep$ and $\ep$ takes values in a neighborhood of zero. As before, $' = d/d\xi$, where $\xi = x - c(\ep)t$ denotes the translation variable, once the value of the speed has been fixed. Figure \ref{figFlow} shows the flow in the phase space of the solutions system \eqref{firstordsyst2} for $c(\ep) = -1 \pm \ep$ with $\ep = 0.001$. To that end, we used the standard tools provided by \textsc{Mathematica}\copyright. For instance, Figure \ref{figFlowneg} depicts the flow when $c = -1 -0.001$, that is, before the bifurcation occurs. Notice that the origin is a hyperbolic saddle with both stable and unstable eigendirections. Figure \ref{figFlowpos} shows the flow in phase space of the system \eqref{firstordsyst2} for $c = -1 + 0.001$, right after the subcritical bifurcation occurs. The unique, small-amplitude periodic orbit that emerges as a result of such bifurcation is represented in red color.

\begin{figure}[t]
\begin{center}
\subfigure[$c=-1-0.001$]{\label{figFlowneg}\includegraphics[scale=.45, clip=true]{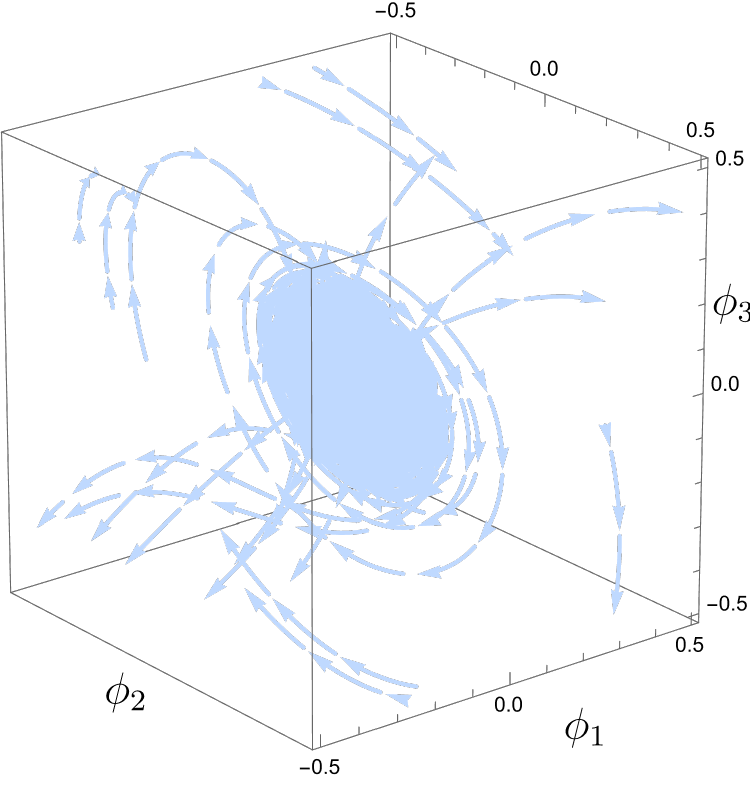}}
\subfigure[$c=-1+0.001$]{\label{figFlowpos}\includegraphics[scale=.45, clip=true]{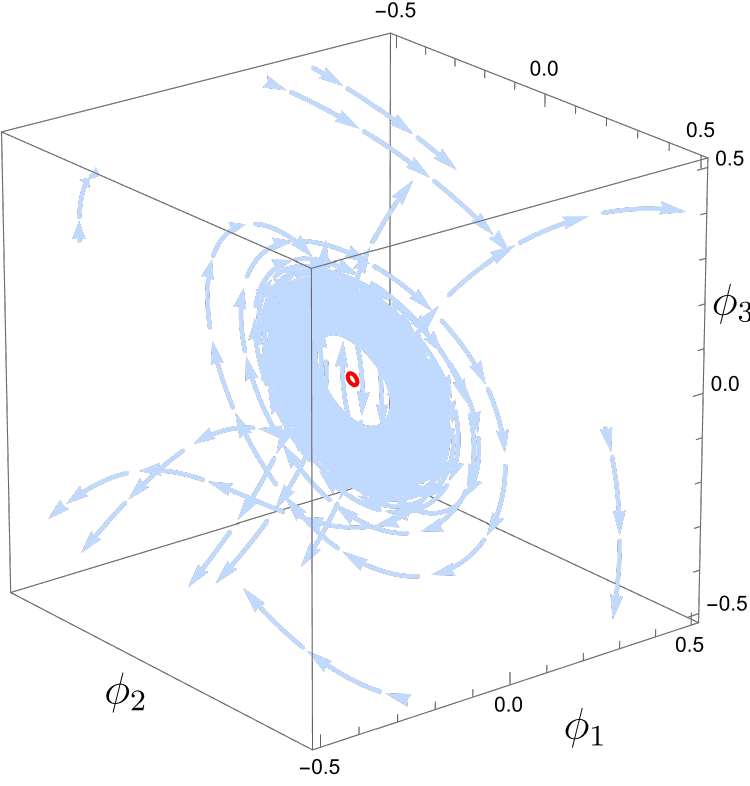}}
\end{center}
\caption{\small{Emergence of small-amplitude periodic waves for the KdVBF equation \eqref{KdVBFn} with $r = \alpha = 1$. Panel (a) shows the flow in the three dimensional phase space of system \eqref{firstordsyst2} for a speed value of $c(\ep) = -1 - \ep$ with $\ep = 0.001$. The arrows in light blue color show the direction of the vector field. The origin is a hyperbolic saddle with both stable and unstable eigendirections. Panel (b) shows the flow in phase space of system \eqref{firstordsyst2} for a speed value of $c(\ep) = -1 + \ep$ with $\ep = 0.001$, after the subcritical bifurcation occurs. As a result, a small amplitude periodic orbit (shown in red color) emerges around the origin (color online).}}\label{figFlow}
\end{figure}

It is to be observed that the small periodic orbit appearing in Figure \ref{figFlowpos} is, in fact, a numerical approximation. For that purpose, we used the \texttt{NDSolve} package, in which a standard explicit Runge-Kutta method was implemented with an initial point given by $(0,0.01,0)$. Figure \ref{figWaveZoom} shows a magnification of the flow in phase space for the speed value $c = -1 + 0.001$ (the same as in Figure \ref{figFlowpos}), together with the computed periodic orbit. Figure \ref{figWavephi} contains the plot of the first component of the approximated periodic orbit, $\phi_1$, as a function of the translation variable $\xi = x - c(\ep)t$.

\begin{figure}[t]
\begin{center}
\subfigure[$c=-1+0.001$]{\label{figWaveZoom}\includegraphics[scale=.45, clip=true]{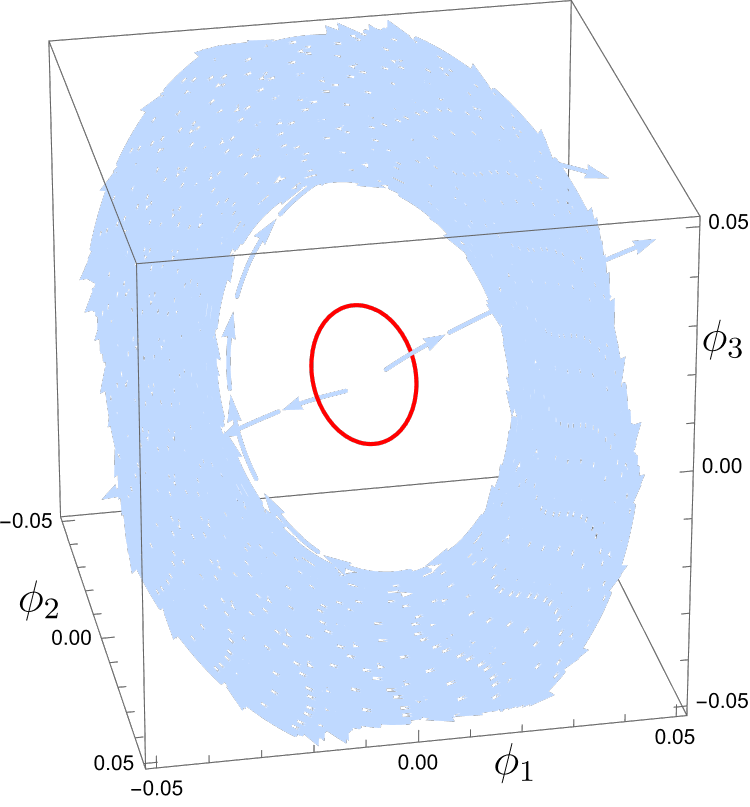}}
\subfigure[$\varphi^\ep(\xi) = \phi_1(\xi)$, $\xi = x - c(\ep)t$]{\label{figWavephi}\includegraphics[scale=.45, clip=true]{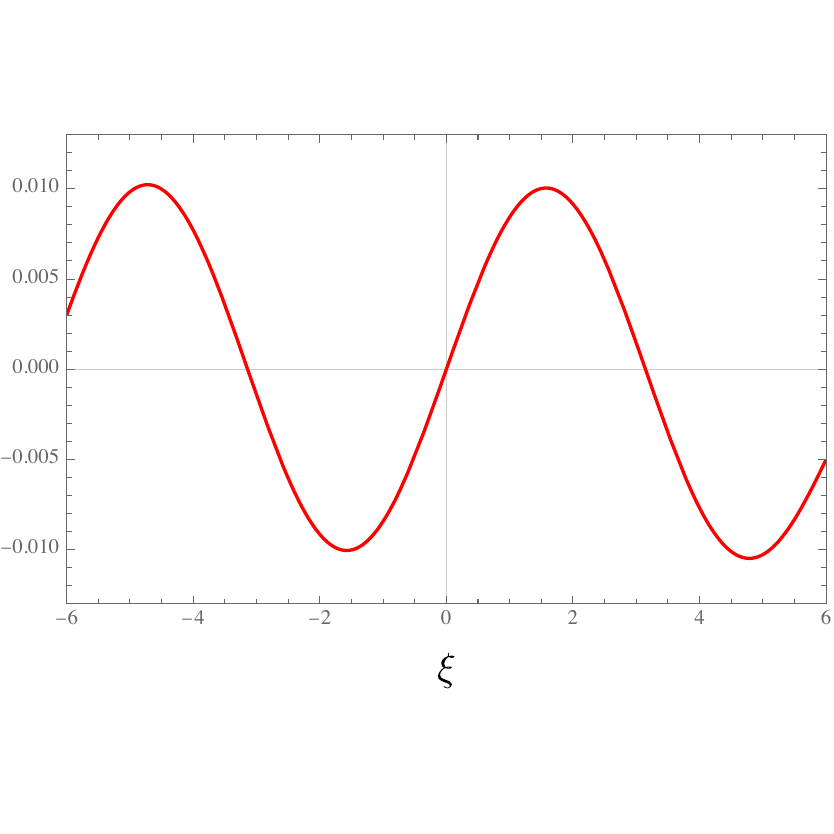}}
\end{center}
\caption{\small{Numerical approximation of the periodic orbit for $c = -1 + 0.001$. Panel (a) is a magnification of the flow Figure \ref{figFlowpos}, together with the numerical approximation of the emerging periodic orbit (in red color). Panel (b) shows the graph (in red) of the first component $\phi_1$ of the numerical approximation of the periodic orbit as a function of the Galilean variable of translation, $\xi = x - c(\ep)t$ (color online).}}\label{figWave}
\end{figure}


\section{The stability problem}

In this section we make precise the notion of spectral stability of the periodic waves under consideration and exhibit its equivalence to computing the Floquet spectrum of the linearized operator around the wave.

\subsection{Spectral stability}

Fix a parameter value $\ep \in (0, \ep_0)$ and consider the unique traveling wave solution to \eqref{KdVBFn}, $u(x,t) = \varphi^\ep(x - c(\ep)t) = \varphi^\ep(\xi)$, from Theorem \ref{thmexist}. Denoting $\xi = x - c(\epsilon)t$ as the translation variable, like before, we rewrite equation \eqref{KdVBFn} as
\begin{equation}\label{KdVBFn2}
u_t - c(\ep) u_{\xi} + \alpha uu_{\xi} + u_{\xi \xi \xi} = u_{\xi \xi } + ru(1-u).
\end{equation}
In other words, we recast the problem into a Galilean coordinate frame under which the profile $\varphi^\ep = \varphi^\ep (\xi)$ is now a stationary solution to \eqref{KdVBFn2} (thanks to equation \eqref{profileq}). Motivated by the notion of spatially localized, finite energy perturbations in the Galilean coordinate frame in which the periodic wave is stationary, let us consider solutions to \eqref{KdVBFn2} of the form
\[
u(\xi,t) = \varphi^\ep(\xi) + e^{\lambda t} v(\xi),
\]
where $\lambda \in \C$ and $v \in L^2(\R)$. Upon substitution and using the profile equation \eqref{profileq}, the spatial perturbation $v$ must be a solution to the following nonlinear equation
\[
\lambda v + \alpha vv_\xi + (\alpha \varphi^\ep(\xi) - c(\ep)) v_\xi + v_{\xi\xi\xi} = v_{\xi\xi} + \big( r(1-2\varphi^\ep(\xi)) - \alpha (\varphi^\ep)'(\xi)\big) v - r\varphi^\ep(\xi)v^2.
\]
Linearizing the last equation we obtain
\[
\lambda v = - v_{\xi \xi \xi } + v_{\xi \xi } - (\alpha \varphi^\ep(\xi ) - c(\ep)) v_\xi  + \big( r(1-2\varphi^\ep(\xi )) - \alpha (\varphi^\ep)'(\xi )\big) v,
\]
that is, we arrive at the following spectral problem for the linearized operator around the periodic wave,
\begin{equation}
\label{specprob}
\lambda v = \cL^\ep v, 
\end{equation}
where, for each $0 < \ep < \ep_0$, the linear operator $\cL^\ep$ is defined as
\begin{equation}
\label{defL}
\left\{
\begin{aligned}
\cL^\ep &: L^2(\R) \to L^2(\R),\\
D(\cL^\ep) &= H^3(\R),\\
\cL^\ep &:= - \partial_{\xi}^3 + \partial_{\xi}^2 + a_1^\ep(\xi) \partial_{\xi} + a_0^\ep(\xi) \Id,
\end{aligned}
\right.
\end{equation}
with coefficients
\begin{equation}\label{defaeps}
\begin{aligned}
a_1^\ep(\xi ) &:= c(\ep) - \alpha \varphi^\ep(\xi ),\\
a_0^\ep(\xi ) &:= r(1-2\varphi^\ep(\xi )) - \alpha (\varphi^\ep)'(\xi ),
\end{aligned}
\end{equation}
for all $\xi  \in \R$. Since the periodic wave is smooth and $L_\ep$-periodic, the coefficients $a_j^\ep$, $j=0,1$, and their derivatives are smooth, bounded and $L_\ep$-periodic. Notice that the operator $\cL^\ep$ is densely defined in $L^2(\R)$, with domain $D(\cL_\ep) = H^3(\R)$. Moreover, in view that the coefficients clearly satisfy $a_j^\ep \in W^{1,\infty}(\R)$, $j=0,1$, from Lemma 3.1.2 in \cite{KaPro13}, p. 40, we conclude that $\cL^\ep$ is a closed operator in $L^2(\R)$.

Let us now recall some definitions from classical spectral theory. Let $\cL : X \to Y$ be a densely defined, closed, linear operator with domain $D(\cL) \subseteq X$ and $X, Y$ Banach spaces. The resolvent set of $\cL$, denoted as $\rho(\cL)_{|X}$, is defined as the set of complex numbers $\lambda \in \C$ such that $\cL - \lambda$ is invertible and $(\cL - \lambda)^{-1}$ is a bounded operator. The complement of $\rho(\cL)_{|X}$ is what we call the $X$-spectrum of $\cL$ and we denote it as $\sigma(\cL)_{|X} = \C \backslash \rho(\cL)_{|X}$. Any complex number, $\lambda \in \C$, is said to belong to the point spectrum, $\ptsp(\cL)_{|X}$, if $\cL - \lambda$ is a Fredholm operator with index equal to zero and non-trivial kernel. Also, $\lambda$ belongs to the essential (or \emph{continuous}, in the periodic case) spectrum, $\ess(\cL)_{|X}$, provided that either $\cL - \lambda$ is not Fredholm or it is Fredholm with non-zero index. Clearly, both the point and essential spectra are subsets of $\sigma(\cL)_{|X}$. Moreover, since the operator is closed then there holds $\sigma(\cL)_{|X} = \ptsp(\cL)_{|X} \cup \ess(\cL)_{|X}$. Indeed, it suffices to verify that $\ptsp(\cL)^c_{|X} \cap \ess(\cL)^c_{|X} \subset \rho(\cL)$. But by definition, $\lambda \in \ptsp(\cL)^c_{|X} \cap \ess(\cL)^c_{|X}$ implies that $\cL - \lambda$ is Fredholm with index zero and trivial kernel. Therefore, the nullity (the dimension of $\ker(\cL - \lambda)$) and the deficiency (codimension of the range of $\cL - \lambda$) are both equal to zero, yielding $\mathrm{Ran}(\cL-\lambda) = Y$. Thus, $\cL - \lambda$ is invertible. Since $\cL - \lambda$ is closed,  this implies that $(\cL - \lambda)^{-1}$ is a bounded operator (see \cite{Kat80}, p. 167, Problem 5.21). Hence, we conclude that $\lambda \in \rho(\cL)$. It is to be observed that the point spectrum comprises discrete eigenvalues with finite (algebraic) multiplicity. The reader is referred to Kato \cite{Kat80} and Kapitula and Promislow \cite{KaPro13} for further information.

\begin{definition}[spectral stability]
\label{defspectstab} 
The periodic traveling wave solution, $\varphi^\ep$, to \eqref{KdVBFn} is \emph{spectrally stable} if the $L^2$-spectrum of the linearized operator around the wave $\cL^\ep$ satisfies
\[
\sigma(\cL^\ep)_{|L^2} \subset \{\lambda \in \C \, : \, \Re \lambda \leq 0\}.
\]
Otherwise, we say that it is \emph{spectrally unstable}.
\end{definition}

\begin{remark}
The choice of $L^2$ as a base space is motivated by the notion of spatially localized, finite energy perturbations in the translation coordinate frame in which the wave is stationary.
\end{remark}

\subsection{The Floquet spectrum}
\label{secFloqsp}

Since the coefficients of the differential operator $\cL^\ep$ are periodic, it is well known from Floquet theory that $\cL^\ep$ has no $L^2$-point spectrum and that $\sigma(\cL^\ep)_{|L^2} = \ess(\cL^\ep)_{|L^2}$ (see Lemma 3.3 in \cite{JMMP14}, or Lemma 59, p. 1487, in \cite{DunSch2}). Moreover, it is possible to parametrize the essential spectrum in terms of Floquet multipliers of the form $e^{i\theta} \in \bbS^1$, $\theta \in \R$ (mod $2\pi$). For each $\theta \in (-\pi,\pi]$, let us denote $\sigma_\theta$ as the set of complex numbers $\lambda$ for which there exists a bounded non-trivial solution $v \in L^\infty (\R)$ to the quasi-periodic problem
\begin{equation}
\label{quasip}
\begin{aligned}
\lambda v &= \cL^\ep v,\\
\partial_x^j v(L_\ep) &= e^{i \theta} \partial_x^j v(0), \qquad j = 0,1,2.
\end{aligned}
\end{equation}
The \emph{Floquet spectrum} is defined as
\[
 \sigma_F := \!\!\bigcup_{-\pi<\theta \leq \pi}\sigma_\theta.
\]
It turns out that the Floquet spectrum coincides with $\sigma(\cL^\ep)_{|L^2}$.
\begin{lemma}[Floquet characterization of the spectrum] $\sigma(\cL)_{|L^2} = \sigma_F$.
\end{lemma}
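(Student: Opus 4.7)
The plan is to apply the standard Bloch/Floquet decomposition for the operator $\cL^\ep$, whose coefficients are $L_\ep$-periodic, in order to reduce the spectral problem on $L^2(\R)$ to a one-parameter family of spectral problems on the bounded interval $[0,L_\ep]$.

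First, I would introduce the (Gelfand--Bloch) transform
\[
(\mathcal{B}u)(\theta,x) = \sum_{n\in\Z} e^{-i\theta n}\, u(x+nL_\ep), \qquad x\in [0,L_\ep],\ \theta\in (-\pi,\pi],
\]
which, after appropriate normalization, is an isometric isomorphism from $L^2(\R)$ onto the direct integral $\int_{(-\pi,\pi]}^\oplus L^2_\mathrm{per}([0,L_\ep])\,d\theta$. Because the coefficients of $\cL^\ep$ are $L_\ep$-periodic, under $\mathcal{B}$ the operator decomposes as $\mathcal{B}\cL^\ep\mathcal{B}^{-1}=\int_{(-\pi,\pi]}^\oplus \cL_\theta^\ep\,d\theta$, where $\cL_\theta^\ep$ is the realization of the same differential expression acting on $L^2([0,L_\ep])$ with the quasi-periodic boundary conditions $\partial_x^j v(L_\ep)=e^{i\theta}\partial_x^j v(0)$, $j=0,1,2$. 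This construction is classical and detailed in \cite{KaPro13, JMMP14}.

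Next, I would invoke the fact that, for such a fibered decomposition, $\sigma(\cL^\ep)_{|L^2} = \overline{\bigcup_{\theta\in(-\pi,\pi]}\sigma(\cL_\theta^\ep)}$. Since each $\cL_\theta^\ep$ is an ODE operator on the compact interval $[0,L_\ep]$ with boundary conditions, the embedding $H^3\hookrightarrow L^2$ on the fiber is compact, so whenever $\cL_\theta^\ep-\lambda$ is invertible it has compact inverse; consequently the fiber spectrum is purely pointwise, consisting of isolated eigenvalues of finite algebraic multiplicity. By definition, $\lambda\in\sigma(\cL_\theta^\ep)$ iff there is a nontrivial $v\in H^3([0,L_\ep])$ satisfying $\cL^\ep v=\lambda v$ together with the quasi-periodic boundary conditions, which extends uniquely to a bounded $v\in L^\infty(\R)$ with $v(x+L_\ep)=e^{i\theta}v(x)$. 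Thus $\bigcup_\theta \sigma(\cL_\theta^\ep) = \sigma_F$.

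Finally, I would verify that $\sigma_F$ is already closed, so that taking closure is unnecessary. One route is the continuous dependence of the eigenvalues of $\cL_\theta^\ep$ on $\theta\in\bbS^1$, combined with the compactness of $\bbS^1$. An equivalent route is to observe that $\sigma_F$ is precisely the set of $\lambda\in\C$ for which the $3\times 3$ monodromy matrix $M(\lambda)$ of the ODE $\cL^\ep v=\lambda v$ has an eigenvalue on $\bbS^1$, a condition that is closed in $\lambda$ by continuity of the spectrum of $M(\lambda)$. The main technical obstacle is the rigorous verification of the operator identity $\mathcal{B}\cL^\ep\mathcal{B}^{-1}=\int^\oplus \cL_\theta^\ep\,d\theta$ with matching of domains, and the equality $\sigma(\cL^\ep)_{|L^2}=\overline{\bigcup_\theta\sigma(\cL_\theta^\ep)}$; both are standard but delicate, and I would rely on the treatments in \cite{KaPro13, DunSch2, JMMP14} rather than reproduce them here.
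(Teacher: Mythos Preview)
Your proposal is correct and follows essentially the same route as the paper: the paper does not give an independent argument but simply refers to the proof of Proposition 3.4 in \cite{JMMP14}, and your sketch (Gelfand--Bloch transform, fibered decomposition, compact-resolvent fibers with discrete spectrum, closedness via continuity of the monodromy eigenvalues in $\lambda$) is precisely the content of that standard proof. Your outline is in fact more detailed than what the paper provides.
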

\begin{proof}
See the proof of Proposition 3.4 in Jones \emph{et al.} \cite{JMMP14}.
\end{proof}
%
%
%
%
One advantage of this characterization is that the purely essential spectrum $\sigma(\cL)_{|L^2}$ can be recast as the union of partial spectra $\sigma_\theta$. Moreover, for each fixed $\theta$ the set $\sigma_\theta$ is discrete because it is the zero set of an analytic function (see, e.g., \cite{JMMP14}, p. 4649). Notice that if $\theta = 0$ then the boundary conditions in \eqref{quasip} become periodic and, consequently, $\sigma_0$ detects perturbations 
which are co-periodic. 

One can further remove the dependence on $\theta$ of the boundary conditions in \eqref{quasip} and pose the problem in a proper periodic space independently of (but indexed by) $\theta$, by means of a Bloch-type transformation (cf. \cite{KaPro13}). Indeed, let us define
\begin{equation}
\label{defBloch}
y:= \frac{\pi \xi}{L_\ep}, \qquad w(y) := e^{-i\theta y/\pi} v\Big( \frac{L_\ep y}{\pi}\Big).
\end{equation}
Then, it is easy to show that for any given $\theta \in (-\pi, \pi]$ any solution to \eqref{quasip} is transformed to the solution to the following equation with periodic boundary conditions,
\[
\left\{
\begin{aligned}
\lambda w &= - \frac{1}{L_\ep^3} \big(i \theta + \pi \partial_y\big)^3 w +  \frac{1}{L_\ep^2} \big(i \theta + \pi \partial_y\big)^2 w +  \frac{1}{L_\ep} a_1^\ep(L_\ep y/\pi) \big(i \theta + \pi \partial_y\big) w +\\
&\;\; + a_0^\ep(L_\ep y/\pi) w,\\
\partial_y^j w(\pi) &= \partial_y^j w(0), \qquad j =0,1,2,
\end{aligned}
\right.
\]
where the coefficients $a_j^\ep(\cdot)$ are defined in \eqref{defaeps}. Therefore, multiplying last equation by $L_\ep^3 > 0$ and defining the rescaled spectral parameter as
\[
\widetilde{\lambda} := L_\ep^3 \lambda,
\]
we arrive at a family of spectral problems
\begin{equation}
\label{famBlochsp}
\widetilde{\lambda} w = \cL_\theta^\ep w,
\end{equation}
indexed by $\theta \in (-\pi,\pi]$, where the Bloch-type operators are defined as
\begin{equation}
\label{defBlochops}
\left\{
\begin{aligned}
\cL_\theta^\ep &: \Ldper([0,\pi]) \to \Ldper([0,\pi]), \\
D(\cL_\theta^\ep) &= \Htper([0,\pi]),\\
\cL_\theta^\ep &= - \big(i \theta + \pi \partial_y\big)^3 + L_\ep \big(i \theta + \pi \partial_y\big)^2 + L_\ep^2 \widetilde{a}_1^\ep(y) \big(i \theta + \pi \partial_y\big) + L_\ep^3 \widetilde{a}_0^\ep(y)\Id.
\end{aligned}
\right.
\end{equation}
Here, we have defined
\[
\widetilde{a}_j^\ep(y) := a_j^\ep(L_\ep y/\pi), \qquad y \in [0,\pi], \qquad j=0,1.
\]

Since the family has compactly embedded domains in $\Ldper([0,\pi])$ then their spectra consist entirely of discrete point eigenvalues, $\sigma(\cL^\ep_\theta)_{|\Ldper} = \ptsp(\cL^\ep_\theta)_{|\Ldper}$. In this fashion, we relate the $L^2$-spectrum of the operator in \eqref{defL}, which is purely essential, to the discrete point spectrum of the family of operators \eqref{defBlochops}, defined on a periodic Sobolev space.  Moreover, since for any $\lambda$ in the spectrum we have $\Re \lambda > 0$ if and only if $\Re \widetilde{\lambda} > 0$, we immediately obtain the following criterion for instability.
\begin{proposition}[spectral instability criterion]
\label{propcrit}
For each $\ep \in (0,\ep_0)$ the periodic wave $\varphi^\ep$ is spectrally unstable if and only if there exists $\theta_0 \in (-\pi, \pi]$ for which
\[
\ptsp(\cL_{\theta_0}^\ep)_{|\Ldper} \cap \{ \lambda \in \C \, : \, \Re \lambda > 0\} \neq \varnothing.
\]
\end{proposition}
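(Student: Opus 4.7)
The proposition is a straightforward consequence of the structural facts already assembled: the negation of the spectral stability definition, the Floquet characterization $\sigma(\cL^\ep)_{|L^2}=\sigma_F$, and the Bloch-type change of variables \eqref{defBloch}. The plan is to chase these three equivalences in order, being careful that the rescaling of the spectral parameter does not alter the sign of the real part.

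First, I would observe that, by Definition \ref{defspectstab}, $\varphi^\ep$ is spectrally unstable if and only if there exists $\lambda \in \sigma(\cL^\ep)_{|L^2}$ with $\Re \lambda>0$. Using the Floquet characterization $\sigma(\cL^\ep)_{|L^2}=\sigma_F=\bigcup_{\theta\in(-\pi,\pi]}\sigma_\theta$, this is equivalent to the existence of some $\theta_0\in(-\pi,\pi]$ and some $\lambda\in\sigma_{\theta_0}$ with $\Re\lambda>0$, that is, to the existence of a nontrivial bounded solution $v\in L^\infty(\R)$ to the quasi-periodic eigenvalue problem \eqref{quasip} for some $\lambda$ in the open right half plane.

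Next I would invoke the Bloch transformation $y=\pi x/L_\ep$, $w(y)=e^{-i\theta y/\pi}v(L_\ep y/\pi)$, which is a bijection between bounded quasi-periodic solutions of \eqref{quasip} on $\R$ and periodic solutions in $\Hdper([0,\pi])$ (in fact $\Htper([0,\pi])$, since the coefficients are smooth) of the rescaled problem \eqref{famBlochsp}, namely $\widetilde{\lambda}w=\cL^\ep_\theta w$ with $\widetilde{\lambda}=L_\ep^3\lambda$. Since the domain of $\cL^\ep_\theta$ is compactly embedded in $\Ldper([0,\pi])$, the spectrum of each $\cL^\ep_\theta$ is purely discrete point spectrum, so the existence of such a nontrivial $w$ is equivalent to $\widetilde{\lambda}\in\ptsp(\cL^\ep_\theta)_{|\Ldper}$. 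Finally, because $L_\ep>0$ (so $L_\ep^3>0$), one has $\Re\lambda>0\iff\Re\widetilde{\lambda}>0$, and combining these equivalences gives precisely the stated criterion.

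There is no real obstacle here; all the substantive work has been done in the construction of the Bloch operators \eqref{defBlochops} and in the cited Floquet characterization lemma. The only point deserving a small verification is that the map $v\mapsto w$ in \eqref{defBloch} indeed sends bounded quasi-periodic solutions on $\R$ into elements of $D(\cL^\ep_\theta)=\Htper([0,\pi])$ and conversely, but this follows from the quasi-periodic boundary conditions $\partial_x^jv(L_\ep)=e^{i\theta}\partial_x^jv(0)$ together with the elliptic regularity of the third-order operator with smooth coefficients, so periodic $L^\infty$ solutions of the transformed equation automatically lie in $\Htper([0,\pi])$.
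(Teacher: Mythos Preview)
Your proposal is correct and follows essentially the same approach as the paper, which treats the proposition as an immediate consequence of the preceding discussion: the Floquet characterization $\sigma(\cL^\ep)_{|L^2}=\sigma_F$, the Bloch transformation \eqref{defBloch} relating $\sigma_\theta$ to the discrete spectrum of $\cL^\ep_\theta$, and the observation that $\widetilde{\lambda}=L_\ep^3\lambda$ with $L_\ep^3>0$ preserves the sign of the real part. You have simply made these steps explicit.
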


\section{Spectral instability}
\label{secinstab}

In this section we follow recent stability analyses of periodic waves (cf. \cite{AlPl21,AMP22,CheDu22,KDT19}) to prove our main result, pertaining to the spectral instability of the waves of Theorem \ref{thmexist}.

\begin{theorem}[spectral instability of small amplitude periodic waves]
\label{mainthm}
There exists ${\ep}_1\in(0,\ep_0]$ such that every small-amplitude periodic wave $\varphi^\ep$ with $\ep\in(0,\ep_1)$ from Theorem \ref{thmexist} is spectrally unstable, that is, the Floquet spectrum of the linearized operator around the wave intersects the unstable half plane, namely
\[
\sigma(\cL^\ep)_{|L^2} \cap \{ \lambda \in \C \, : \, \Re \lambda > 0\} \neq \varnothing.
\]
\end{theorem}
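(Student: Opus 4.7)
The plan is to combine the instability criterion of Proposition \ref{propcrit} with classical analytic perturbation theory applied to the Bloch family \eqref{defBlochops}. First, I would introduce the limiting (unperturbed) operator $\cL_\theta^0$ on $\Ldper([0,\pi])$ obtained by formally evaluating \eqref{defBlochops} at $\ep = 0$, namely by setting $\varphi^\ep \equiv 0$, $L_\ep = L_0 := 2\pi/\sqrt{r}$ and $c(\ep) = c_0 = -r$, so that $\widetilde{a}_0^0 \equiv r$ and $\widetilde{a}_1^0 \equiv -r$. Since $\cL_\theta^0$ then has constant coefficients, it is diagonalized by the Fourier basis $\{w_n(y) = e^{2iny}\}_{n \in \Z}$ of $\Ldper([0,\pi])$, and a direct calculation yields the eigenvalues
$$
\widetilde{\lambda}_n(\theta) = L_0\bigl(4\pi^2 - \kappa_n^2\bigr) + i\,\kappa_n\bigl(\kappa_n^2 - 4\pi^2\bigr), \qquad \kappa_n := \theta + 2n\pi.
$$
Taking $\theta_0 = 0$ and $n = 0$ produces the real, \emph{simple} eigenvalue $\widetilde{\lambda}_0(0) = 4\pi^2 L_0 = 8\pi^3/\sqrt{r} > 0$, well separated from the remaining eigenvalues $\{\widetilde{\lambda}_n(0)\}_{n \neq 0}$ (note in particular the double eigenvalue at the origin contributed by $n = \pm 1$, which is not involved in the argument).

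Second, I would regard $\{\cL_\theta^\ep\}_{\ep \geq 0}$ as a perturbation family. The explicit difference
$$
\cL_\theta^\ep - \cL_\theta^0 = (L_\ep - L_0)(i\theta + \pi\partial_y)^2 + \bigl(L_\ep^2\widetilde{a}_1^\ep - L_0^2\widetilde{a}_1^0\bigr)(i\theta + \pi\partial_y) + \bigl(L_\ep^3\widetilde{a}_0^\ep - L_0^3\widetilde{a}_0^0\bigr)\Id
$$
is a second-order differential operator whose coefficients, by \eqref{defaeps}, \eqref{period} and \eqref{amplitude}, satisfy $|L_\ep - L_0| = O(\ep)$ together with $\|L_\ep^2\widetilde{a}_1^\ep - L_0^2\widetilde{a}_1^0\|_{L^\infty} = O(\sqrt{\ep})$ and $\|L_\ep^3\widetilde{a}_0^\ep - L_0^3\widetilde{a}_0^0\|_{L^\infty} = O(\sqrt{\ep})$ as $\ep \to 0^+$. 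Since $\cL_\theta^0$ is third-order with domain $\Htper([0,\pi])$, standard interpolation in the periodic Sobolev scale then shows that $\cL_\theta^\ep - \cL_\theta^0$ is $\cL_\theta^0$-bounded in the sense of Kato with a relative bound that tends to zero; equivalently, $\{\cL_\theta^\ep\}$ is norm-resolvent continuous at $\ep = 0$ (cf.\ \cite{Kat80}, \S IV.2).

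Third, fixing $\theta_0 = 0$ and a small counterclockwise contour $\Gamma \subset \C$ enclosing $\widetilde{\lambda}_0(0) = 8\pi^3/\sqrt{r}$ and no other element of $\sigma(\cL_0^0)_{|\Ldper}$, the norm-resolvent continuity established above implies that for $\ep > 0$ sufficiently small the resolvent $(\lambda - \cL_0^\ep)^{-1}$ is defined and uniformly bounded on $\Gamma$, converging there in operator norm to $(\lambda - \cL_0^0)^{-1}$. Consequently, the Riesz projections
$$
P^\ep := \frac{1}{2\pi i}\oint_\Gamma (\lambda - \cL_0^\ep)^{-1}\,d\lambda
$$
converge in norm to $P^0$, which has rank one; hence $\mathrm{rank}(P^\ep) = 1$ for all small $\ep > 0$, and $\cL_0^\ep$ possesses a unique simple eigenvalue $\widetilde{\lambda}^\ep \in \ptsp(\cL_0^\ep)_{|\Ldper}$ inside $\Gamma$ with $\widetilde{\lambda}^\ep \to 8\pi^3/\sqrt{r}$ as $\ep \to 0^+$. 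In particular, $\Re\widetilde{\lambda}^\ep > 0$ for all sufficiently small $\ep \in (0, \ep_1)$, and Proposition \ref{propcrit} applied with $\theta_0 = 0$ delivers the claimed spectral instability. The principal technical obstacle is the rigorous verification of the relative boundedness estimate with a relative bound vanishing as $\ep \to 0^+$, uniformly in a neighborhood of $\theta_0 = 0$; this requires controlling terms of the form $(\widetilde{a}_j^\ep - \widetilde{a}_j^0)\partial_y^k w$ with $k \leq 2$, which follows by combining the smoothness and the $O(\sqrt{\ep})$-decay of $\varphi^\ep$ and $(\varphi^\ep)'$ furnished by Theorem \ref{thmexist} with standard Gagliardo--Nirenberg interpolation in $\Hmper([0,\pi])$.
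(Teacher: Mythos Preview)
Your proposal is correct and follows the same overall strategy as the paper: fix $\theta_0 = 0$, identify the positive eigenvalue $\widetilde{\lambda}_0 = rL_0^3 = 8\pi^3/\sqrt{r}$ of the constant-coefficient limit $\cL_0^0$ (associated to the constant eigenfunction), show that the difference $\cL_0^\ep - \cL_0^0$ is a second-order operator with $O(\sqrt{\ep})$ coefficients and hence $\cL_0^0$-bounded with vanishing relative bound, invoke Kato perturbation theory to track the eigenvalue for small $\ep$, and conclude via Proposition \ref{propcrit}. The one substantive variation is in how you package the perturbation step: you compute the entire Fourier spectrum of $\cL_0^0$ explicitly, verify that $\widetilde{\lambda}_0$ is \emph{simple} and isolated, and then run the direct Riesz-projection argument (norm-resolvent continuity forces $\mathrm{rank}\,P^\ep = 1$). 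The paper, by contrast, does not establish simplicity (it says so explicitly in the Discussion) and instead appeals to the general stability theorem for discrete eigenvalues of analytic type-(A) families, obtaining Puiseux branches $\widetilde{\lambda}_\ell(\kappa)$ that remain near $\widetilde{\lambda}_0$. Your route is a bit more elementary and gives the sharper information that the unstable eigenvalue of $\cL_0^\ep$ is itself simple; the paper's route is more robust in that it would go through unchanged even without simplicity.
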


In order to prove Theorem \ref{mainthm}, we first need to pose the spectral problem \eqref{famBlochsp} for each operator in \eqref{defBlochops} as a spectral problem for a perturbed operator, where the perturbation parameter will depend on $\ep > 0$.

\subsection{Relatively bounded perturbations}

In this section we recast the spectral equation \eqref{famBlochsp} for each Bloch operator as a perturbation problem. For the forthcoming analysis, it is crucial that these perturbation operators are relatively bounded with respect to the operators of lower order. First, let us observe that, from Theorem \ref{thmexist}, the periodic waves under consideration have fundamental period and amplitude given by
\[
L_\ep = \frac{2\pi}{\sqrt{r}} + O(\ep) =: L_0 + O(\ep),
\]
and
\[
|\varphi^\ep(x)|, |(\varphi^\ep)'(x)| = O(\sqrt{|c(\ep)- c_0|}) = O(\sqrt{\ep}),
\]
respectively, as $\ep \to 0^+$, and uniformly for all $x \in [0, L_\ep]$. Hence, we write
\[
L_\ep = L_0 + \sqrt{\ep} L_1,
\]
where $L_1 := (L_\ep - L_0)/\sqrt{\ep} = O(\sqrt{\ep}) = O(1)$ as $\ep \to 0^+$ and
\[
c = c(\ep) = c_0 + \sqrt{\ep} c_1,
\]
where $c_1 := (c(\ep) - c_0)/\sqrt{\ep} = O(1)$. Note that, even though the speed and the fundamental period are perturbations of $O(\ep)$, for convenience and without loss of accuracy, we regard both quantities as perturbations of $O(\sqrt{\ep})$, which is the size of the amplitude of the waves. Therefore, the first order coefficient of each Bloch operator in \eqref{defBlochops} can be written as
\[
\begin{aligned}
L_\ep^2 \widetilde{a}_1^\ep(y) = L_\ep^2 a_1^\ep(L_\ep y/\pi) &= (L_0 + O(\ep))^2 a_1^\ep(L_\ep y/\pi)\\
&=  (L_0 + O(\ep))^2 (c(\ep) -\alpha \varphi^\ep(L_\ep y/\pi))\\
&= (L_0 + O(\ep))^2 (c_0 + O(\ep) - \alpha O(\sqrt{\ep}))\\
&= L_0^2 c_0 + \sqrt{\ep}b_1(y),
\end{aligned}
\]
where
\[
b_1(y) := \frac{L_\ep^2 a_1^\ep(L_\ep y/\pi)  - L_0^2 c_0}{\sqrt{\ep}} = O(1), \qquad \forall y \in [0,\pi].
\]
Recall that $L_0 = 2\pi / \sqrt{r}$ and $c_0 = -r$, so that $L_0^2 c_0 = - 4\pi^2 < 0$. Likewise, the coefficient of zeroth order is
\[
\begin{aligned}
L_\ep^3 \widetilde{a}_0^\ep(y) = L_\ep^3 a_0^\ep(L_\ep y/\pi) &= (L_0 + O(\ep))^3 a_0^\ep(L_\ep y/\pi)\\
&=  (L_0 + O(\ep))^3 \big(r(1- 2 \varphi^\ep(L_\ep y/\pi)) - \alpha (\varphi^\ep)'(L_\ep y/\pi)\big)\\
&= (L_0 + O(\ep))^3 (r + O(\sqrt{\ep}))\\
&= L_0^3 r + O(\sqrt{\ep})\\
&=  L_0^3 r + \sqrt{\epsilon} b_0(y),
\end{aligned}
\]
where
\[
b_0(y) := \frac{L_\ep^3 a_0^\ep(L_\ep y/\pi)  - L_0^3 r}{\sqrt{\ep}} = O(1), \qquad \forall y \in [0,\pi].
\]
Finally, $L_\ep = L_0 + O(\ep)$ implies that
\[
b_2 := \frac{L_\ep - L_0}{\sqrt{\ep}} = O(1),
\]
is a uniformly bounded constant coefficient (indeed, $b_2 \leq C\sqrt{\ep} \leq C$ for $0 < \ep \leq \ep_0$, sufficiently small). Let us denote
\[
\vep := \sqrt{\ep} \in (0,\sqrt{\ep_0}) = (0, \vep_0). 
\]
To sum up, we recast the Bloch operators \eqref{defBlochops} in perturbation form as
\[
\begin{aligned}
\cL_\theta^\ep &= - \big(i \theta + \pi \partial_y\big)^3 + (L_0 + \vep b_2) \big(i \theta + \pi \partial_y\big)^2 + \\
&\quad + (L_0^2 c_0 + \vep b_1(y))(i \theta + \pi \partial_y) + (L_0^3 r + \vep b_0(y)) \Id,\\
&= \cL_\theta^0 + \vep \cL_\theta^1,
\end{aligned}
\]
for each $\theta \in (-\pi, \pi]$, where we have defined the operators $\cL_\theta^j$, $j=0,1$, as
\begin{equation}
\label{defL0}
\left\{
\begin{aligned}
\cL_\theta^0 &: \Ldper([0,\pi]) \to \Ldper([0,\pi]), \\
D(\cL_\theta^0) &= \Htper([0,\pi]),\\
\cL_\theta^0 &= - \big(i \theta + \pi \partial_y\big)^3 + L_0 \big(i \theta + \pi \partial_y\big)^2 + L_0^2 c_0 \big(i \theta + \pi \partial_y\big) + L_0^3 r \, \Id,
\end{aligned}
\right.
\end{equation}
and
\begin{equation}
\label{defL1}
\left\{
\begin{aligned}
\cL_\theta^1 &: \Ldper([0,\pi]) \to \Ldper([0,\pi]), \\
D(\cL_\theta^1) &= \Hdper([0,\pi]),\\
\cL_\theta^1 &= b_2 \big(i \theta + \pi \partial_y\big)^2 + b_1(y) \big(i \theta + \pi \partial_y\big) + b_0(y) \, \Id,
\end{aligned}
\right.
\end{equation}
for each $\theta \in (-\pi,\pi]$. Notice that $\cL_\theta^0$ is a third order differential operator with constant coefficients, whereas $\cL_\theta^1$ is a second order operator with bounded coefficients. Moreover, both operators are closed and densely defined in $\Ldper([0,\pi])$, with $D(\cL_\theta^0) \subset D(\cL_\theta^1)$.

In this fashion, we have rewritten the spectral problem \eqref{famBlochsp} as a perturbed spectral problem of the form
\begin{equation}
\label{pertsp}
\widetilde{\lambda} w = (\cL_\theta^0 + \vep \cL_\theta^1) w, \qquad w \in \Htper([0,\pi]) = D(\cL_\theta^\ep).
\end{equation}

Before showing that this is a case of relatively bounded perturbations, we need to prove a preliminary lemma.
\begin{lemma}
\label{lemineq}
For every $u \in \Htper([0,\pi])$ there hold the estimates
\begin{align}
\| u_{yy} \|_{\Ldper} &\leq \tfrac{2}{3} \delta^{3/2} \| u_{yyy} \|_{\Ldper} + \tfrac{1}{3} \delta^{-3} \| u \|_{\Ldper}, \label{estuyy}\\
\| u_{y} \|_{\Ldper} &\leq \tfrac{1}{3} \delta^{3} \| u_{yyy} \|_{\Ldper} + \tfrac{2}{3} \delta^{-3/2} \| u \|_{\Ldper}, \label{estuy}
\end{align}
for any arbitrary $\delta > 0$.
\end{lemma}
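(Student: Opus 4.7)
The plan is to prove these two inequalities in two stages: first obtain scale-free Gagliardo--Nirenberg interpolation bounds by integration by parts, and then use Young's inequality with a free scaling parameter to inject $\delta$ with the desired exponents.

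In the first stage I would exploit the fact that $u \in \Htper([0,\pi])$, so that all boundary contributions from integration by parts cancel. Two applications of Cauchy--Schwarz, applied respectively to $\langle u, -u_{yy}\rangle_{\Ldper}$ and $\langle u_y, -u_{yyy}\rangle_{\Ldper}$, would give the elementary bounds $\|u_y\|_{\Ldper}^2 \leq \|u\|_{\Ldper}\|u_{yy}\|_{\Ldper}$ and $\|u_{yy}\|_{\Ldper}^2 \leq \|u_y\|_{\Ldper}\|u_{yyy}\|_{\Ldper}$. Eliminating $\|u_y\|_{\Ldper}$ between these two relations yields $\|u_{yy}\|_{\Ldper}^3 \leq \|u\|_{\Ldper}\|u_{yyy}\|_{\Ldper}^2$, and hence the scale-free interpolation inequalities
$\|u_{yy}\|_{\Ldper} \leq \|u_{yyy}\|_{\Ldper}^{2/3}\|u\|_{\Ldper}^{1/3}$ and, by substitution, $\|u_y\|_{\Ldper} \leq \|u_{yyy}\|_{\Ldper}^{1/3}\|u\|_{\Ldper}^{2/3}$.

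In the second stage I would introduce a free parameter $s > 0$ and apply Young's inequality $ab \leq a^p/p + b^q/q$ to the products above. For \eqref{estuyy}, I would write $\|u_{yyy}\|_{\Ldper}^{2/3}\|u\|_{\Ldper}^{1/3} = (s\|u_{yyy}\|_{\Ldper})^{2/3}(s^{-2}\|u\|_{\Ldper})^{1/3}$ and use the conjugate exponents $p = 3/2$, $q = 3$, obtaining the upper bound $\tfrac{2}{3}\, s\,\|u_{yyy}\|_{\Ldper} + \tfrac{1}{3}\, s^{-2}\,\|u\|_{\Ldper}$; the choice $s = \delta^{3/2}$ then matches the exponents $\delta^{3/2}$ and $\delta^{-3}$ in \eqref{estuyy}. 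For \eqref{estuy}, I would write $\|u_{yyy}\|_{\Ldper}^{1/3}\|u\|_{\Ldper}^{2/3} = (s\|u_{yyy}\|_{\Ldper})^{1/3}(s^{-1/2}\|u\|_{\Ldper})^{2/3}$ and apply Young with $p = 3$, $q = 3/2$, yielding $\tfrac{1}{3}\, s\,\|u_{yyy}\|_{\Ldper} + \tfrac{2}{3}\, s^{-1/2}\,\|u\|_{\Ldper}$; the choice $s = \delta^3$ then produces the exponents $\delta^3$ and $\delta^{-3/2}$.

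The computation is entirely routine and I anticipate no serious obstacle. The only point requiring mild attention is the bookkeeping that fixes the free scaling $s$ in terms of $\delta$ so that the resulting powers of $\delta$ agree with the exponents $3/2, -3$ in \eqref{estuyy} and $3, -3/2$ in \eqref{estuy}, respectively; the prefactors $2/3$ and $1/3$ fall out automatically from Young's inequality with conjugate exponents $3/2$ and $3$.
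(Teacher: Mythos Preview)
Your proposal is correct and follows essentially the same approach as the paper: integration by parts plus Cauchy--Schwarz to obtain the scale-free interpolation inequalities $\|u_{yy}\|_{\Ldper} \leq \|u_{yyy}\|_{\Ldper}^{2/3}\|u\|_{\Ldper}^{1/3}$ and $\|u_y\|_{\Ldper} \leq \|u_{yyy}\|_{\Ldper}^{1/3}\|u\|_{\Ldper}^{2/3}$, followed by Young's inequality with conjugate exponents $3/2$ and $3$. The only cosmetic difference is that you introduce an auxiliary scaling $s$ and then solve for $s$ in terms of $\delta$, whereas the paper inserts $\delta$ directly into the factors $a$ and $b$ of Young's inequality; the computations are equivalent.
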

\begin{proof}
Integrating by parts and using the periodicity of $u \in \Htper([0,\pi])$ and of its derivatives, one obtains
\[
\begin{aligned}
\| u_{y} \|_{\Ldper}^2 &= \int_0^\pi |u_y|^2 \, dy = \left.(\overline{(u_y)} u)\right|_0^\pi - \int_0^\pi \overline{(u_{yy})} u \, dy \leq \int_0^\pi |u_{yy}||u| \, dy \\
&\leq \| u_{yy} \|_{\Ldper} \| u \|_{\Ldper}.
\end{aligned}
\]
Similarly, we have
\[
\| u_{yy} \|_{\Ldper} \leq \| u_{yyy} \|_{\Ldper}^{1/2} \| u_{y} \|_{\Ldper}^{1/2} \leq \| u_{yyy} \|_{\Ldper}^{1/2} \| u_{yy} \|_{\Ldper}^{1/4}  \| u \|_{\Ldper}^{1/4},
\]
which yields, in turn,
\[
\| u_{yy} \|_{\Ldper}^{3/4} \leq \| u_{yyy} \|^{1/2}_{\Ldper} \| u \|^{1/4}_{\Ldper}.
\]
The last inequality is equivalent to
\begin{equation}
\label{aux1}
\| u_{yy} \|_{\Ldper} \leq \| u_{yyy} \|^{2/3}_{\Ldper}  \| u \|^{1/3}_{\Ldper},
\end{equation}
and, as a consequence, 
\begin{align}
\| u_{yy} \|_{\Ldper} \leq \| u_{yy} \|^{1/2}_{\Ldper} \| u \|^{1/2}_{\Ldper} &\leq \| u_{yyy} \|^{1/3}_{\Ldper}  \| u \|^{1/6}_{\Ldper} \| u \|^{1/2}_{\Ldper}\nonumber \\
&=  \| u_{yyy} \|^{1/3}_{\Ldper}  \| u \|^{2/3}_{\Ldper}. \label{aux2}
\end{align}

Next, apply Young's inequality, $ab \leq a^p/p + b^q/q$ with $p^{-1} + q^{-1} =1$ and $a, b > 0$ by taking $a = \delta \|u_{yyy}\|^{2/3}_{\Ldper}$, $b = \delta^{-1} \| u \|_{\Ldper}^{1/3}$, $p = 3/2$, $q =3$ and any $\delta > 0$, to obtain
\[
\| u_{yyy} \|^{2/3}_{\Ldper}  \| u \|^{1/3}_{\Ldper} \leq \tfrac{2}{3} \delta^{3/2} \| u_{yyy} \|_{\Ldper} + \tfrac{1}{3} \delta^{-3} \| u \|_{\Ldper}.
\]
Upon substitution into \eqref{aux1} we get \eqref{estuyy}.
Similarly, take $a = \delta \| u_{yyy} \|_{\Ldper}^{1/3}$, $b = \delta^{-1} \| u \|_{\Ldper}^{2/3}$, $p =3$, $q = 3/2$ and apply Young´s inequality to obtain
\[
\| u_{yyy} \|^{1/3}_{\Ldper}  \| u \|^{2/3}_{\Ldper} \leq \tfrac{1}{3} \delta^3 \| u_{yyy} \|_{\Ldper} + \tfrac{2}{3} \delta^{-3/2} \| u \|_{\Ldper}.
\]
Substitute last inequality into \eqref{aux2} to arrive at \eqref{estuy}. The lemma is proved.
\end{proof}

Let us now recall that if $\cT, \cL : X \to Y$ are linear operators in $X$, $Y$, Banach spaces, we say that $\cT$ is relatively bounded with respect to $\cL$ (or simply $\cL$-bounded) provided that $D(\cL) \subset D(\cT)$ and that there exist constants $a, b \geq 0$ such that
\[
\| \cT u \| \leq a \| u \| + b \| \cL u \|,
\]
for all $u \in D(\cL)$, see Kato \cite{Kat80}, p. 190. The following lemma is the main result of this section.

\begin{lemma}
\label{lemuno}
For any fixed $\theta \in (-\pi, \pi]$, the operator $\cL_\theta^1$ is relatively bounded with respect to $\cL_\theta^0$.
\end{lemma}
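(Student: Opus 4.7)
The plan is to show the defining inequality $\|\cL_\theta^1 u\|_{\Ldper} \leq a\|u\|_{\Ldper} + b\|\cL_\theta^0 u\|_{\Ldper}$ for all $u \in D(\cL_\theta^0) = \Htper([0,\pi])$. The inclusion of domains $D(\cL_\theta^0)\subset D(\cL_\theta^1)$ is immediate from $\Htper \subset \Hdper$. My strategy is to estimate both sides in terms of the $L^2_{\mathrm{per}}$-norms of $u$ and its derivatives, control the intermediate derivatives by $\|u_{yyy}\|_{\Ldper}$ and $\|u\|_{\Ldper}$ via the interpolation estimates of Lemma \ref{lemineq}, and finally extract $\|u_{yyy}\|_{\Ldper}$ from $\cL_\theta^0 u$ by exploiting the fact that $\cL_\theta^0$ has leading term $-\pi^3\partial_y^3$.

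First, I would expand the binomials $(i\theta+\pi\partial_y)^2$ and $(i\theta+\pi\partial_y)$ appearing in \eqref{defL1} and use the uniform bounds $|b_2|\leq C$, $\|b_0\|_{L^\infty}, \|b_1\|_{L^\infty}\leq C$ (which hold since the $b_j$ are $O(1)$ coefficients coming from smooth periodic waves on a compact interval) together with $|\theta|\leq\pi$ to obtain a bound of the form
\[
\|\cL_\theta^1 u\|_{\Ldper} \leq K_2 \|u_{yy}\|_{\Ldper} + K_1 \|u_y\|_{\Ldper} + K_0 \|u\|_{\Ldper},
\]
for some constants $K_0, K_1, K_2>0$ independent of $\theta$ and of $\ep$. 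Similarly, expanding $(i\theta+\pi\partial_y)^3$ in \eqref{defL0}, the triangle inequality yields
\[
\pi^3 \|u_{yyy}\|_{\Ldper} \leq \|\cL_\theta^0 u\|_{\Ldper} + M_2 \|u_{yy}\|_{\Ldper} + M_1 \|u_y\|_{\Ldper} + M_0 \|u\|_{\Ldper},
\]
for suitable constants $M_j>0$.

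Now I would apply Lemma \ref{lemineq} to both inequalities, choosing a single small $\delta > 0$ (to be fixed later) so that the right-hand sides involve only $\|u_{yyy}\|_{\Ldper}$ and $\|u\|_{\Ldper}$. Substituting \eqref{estuyy} and \eqref{estuy} into the second displayed inequality gives
\[
\pi^3\|u_{yyy}\|_{\Ldper} \leq \|\cL_\theta^0 u\|_{\Ldper} + \left(\tfrac{2M_2}{3}\delta^{3/2} + \tfrac{M_1}{3}\delta^3\right)\|u_{yyy}\|_{\Ldper} + C(\delta)\|u\|_{\Ldper},
\]
where $C(\delta)$ collects the remaining terms. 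Choosing $\delta>0$ small enough so that $\tfrac{2M_2}{3}\delta^{3/2} + \tfrac{M_1}{3}\delta^3 \leq \pi^3/2$, I can absorb the $\|u_{yyy}\|_{\Ldper}$ term from the right to obtain
\[
\|u_{yyy}\|_{\Ldper} \leq A_1 \|\cL_\theta^0 u\|_{\Ldper} + A_0 \|u\|_{\Ldper},
\]
for some constants $A_0, A_1>0$ independent of $u$.

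Finally, I would feed this back into the estimate for $\|\cL_\theta^1 u\|_{\Ldper}$: applying Lemma \ref{lemineq} once more (with $\delta=1$, say) to bound $\|u_{yy}\|_{\Ldper}$ and $\|u_y\|_{\Ldper}$ by a constant times $\|u_{yyy}\|_{\Ldper}+\|u\|_{\Ldper}$, and then inserting the above control on $\|u_{yyy}\|_{\Ldper}$, yields
\[
\|\cL_\theta^1 u\|_{\Ldper} \leq b\|\cL_\theta^0 u\|_{\Ldper} + a\|u\|_{\Ldper},
\]
which is precisely the definition of relative boundedness. The main technical obstacle is the $\delta$-calibration step: one needs to verify that the constants $M_1, M_2$ coming from the constant-coefficient third-order operator $\cL_\theta^0$ are indeed bounded uniformly in $\theta\in(-\pi,\pi]$, so that a single choice of $\delta$ works across the whole family of Bloch operators; this is clear from the explicit form of \eqref{defL0}, since the dependence on $\theta$ is polynomial of bounded degree with $|\theta|\leq\pi$.
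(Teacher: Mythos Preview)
Your proposal is correct and follows essentially the same route as the paper: bound $\cL_\theta^1 u$ by the intermediate derivatives, use the interpolation estimates of Lemma~\ref{lemineq} to reduce everything to $\|u_{yyy}\|_{\Ldper}$ and $\|u\|_{\Ldper}$, then extract $\|u_{yyy}\|_{\Ldper}$ from $\cL_\theta^0 u$ via the reverse triangle inequality and absorb the lower-order remainder by choosing $\delta$ small. The only cosmetic differences are that the paper carries a single small $\delta$ through both estimates (rather than using $\delta=1$ for the $\cL_\theta^1$ bound), and that your closing remark on uniformity in $\theta$ goes slightly beyond what the lemma requires, since it is stated for fixed $\theta$.
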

\begin{proof}
Since $D(\cL_\theta^0) = \Htper([0,\pi]) \subset \Hdper([0,\pi]) = D(\cL_\theta^1)$, we only need to show that there exist non-negative constants, $a, b \geq 0$, such that
\begin{equation}
\label{Lboundest}
\| \cL_\theta^1 u \|_{\Ldper} \leq a \| u \|_{\Ldper} + b \| \cL_\theta^0 u \|_{\Ldper},
\end{equation}
for each $u \in \Htper([0,\pi])$. First, let us estimate
\[
\begin{aligned}
\| \cL_\theta^1 u \|_{\Ldper} &= \left\| b_2 (i \theta + \pi \partial_y)^2 u + b_1(y) \big(i \theta + \pi \partial_y\big) + b_0(y) u \right\|_{\Ldper}\\
&\leq \pi^2 |b_2| \| u_{yy} \|_{\Ldper} + \pi \big( \| b_1 \|_{L^\infty} + 2 |\theta||b_2| \big) \| u_y \|_{\Ldper} +\\
&\quad + \big( \| b_0 \|_{L^\infty} + |\theta| \| b_1 \|_{L^\infty} + \theta^2 |b_2| \big) \| u \|_{\Ldper}\\
&\leq C_2 \| u_{yy} \|_{\Ldper} + C_1 \| u_{y} \|_{\Ldper} + C_0 \| u \|_{\Ldper},
\end{aligned}
\]
with uniform constants
\[
C_2 := \pi^2 |b_2|, \quad C_1 := \pi \big( \| b_1 \|_{L^\infty} + 2 |\theta||b_2| \big), \quad C_0 := \big( \| b_0 \|_{L^\infty} + |\theta| \| b_1 \|_{L^\infty} + \theta^2 |b_2| \big),
\]
because $|\theta| \leq \pi$. Using estimates \eqref{estuyy} and \eqref{estuy} we arrive at
\begin{equation}
\label{estL1theta}
\| \cL_\theta^1 u \|_{\Ldper} \leq C_{3,\delta} \|u_{yyy} \|_{\Ldper} + C_{4,\delta} \| u\|_{\Ldper},
\end{equation}
where
\[
C_{3,\delta} := \tfrac{2}{3} C_2 \delta^{3/2} + \tfrac{1}{3} C_1 \delta^3, \quad C_{4,\delta} := \tfrac{1}{3} C_2 \delta^{-3} + \tfrac{2}{3} C_1 \delta^{-3/2} + C_0,
\]
are uniform positive constants for each $\delta > 0$. 

On the other hand, we have the estimate,
\[
\begin{aligned}
\| \cL_\theta^0 u \|_{\Ldper} &= \left\| - \big(i \theta + \pi \partial_y\big)^3 u + L_0 \big(i \theta + \pi \partial_y\big)^2 u + L_0^2 c_0 \big(i \theta + \pi \partial_y\big) u + L_0^3 r  u \right\|_{\Ldper}\\
&=  \left\| - \pi^3 u_{yyy} + (L_0 \pi^2 - 3 i \theta \pi^2) u_{yy} + \big(2i\theta \pi L_0 + L_0^2 c_0 \pi - 3(i\theta)^2 \pi\big)u_y + \right.\\
& \qquad \left. + \big(L_0 (i\theta)^2 + L_0^2 c_0 i \theta + rL_0^3 - (i\theta)^3\big)u  \right\|_{\Ldper}\\
&\geq \pi^3 \| u_{yyy} \|_{\Ldper} - |b_3| \| u_{yy} \|_{\Ldper} - |b_4| \| u_y \|_{\Ldper} - |b_5| \| u \|_{\Ldper},
\end{aligned}
\]
where
\[
\begin{aligned}
b_3 &:= L_0 \pi^2 - 3i\theta \pi^2,\\
b_4 &:= 2i\theta \pi L_0 + L_0^2 c_0 \pi - 3(i\theta)^2 \pi,\\
b_5 &:= L_0 (i\theta)^2 + L_0^2 c_0 i \theta + rL_0^3 - (i\theta)^3,
\end{aligned}
\]
are uniformly bounded constants. Now, use estimates \eqref{estuyy} and \eqref{estuy} to obtain
\[
\begin{aligned}
\| \cL_\theta^0 u \|_{\Ldper} &\geq \pi^3 \| u_{yyy} \|_{\Ldper} - |b_3| \big( \tfrac{2}{3} \delta^{3/2} \| u_{yyy} \|_{\Ldper} + \tfrac{1}{3} \delta^{-3} \| u \|_{\Ldper} \big) \\
& \qquad - |b_4| \big( \tfrac{1}{3} \delta^{3} \| u_{yyy} \|_{\Ldper} + \tfrac{2}{3} \delta^{-3/2} \| u \|_{\Ldper} \big)\\
& \qquad - |b_5| \| u \|_{\Ldper}\\
&\geq \big( \pi^3 - \tfrac{2}{3} |b_3| \delta^{3/2} - \tfrac{1}{3} |b_4| \delta^3 \big) \| u_{yyy} \|_{\Ldper} \\
& \qquad - \big( |b_5| + \tfrac{2}{3} \delta^{-3/2} |b_4| + \tfrac{1}{3} |b_3| \delta^{-1/3} \big) \| u \|_{\Ldper}.
\end{aligned}
\]
Now, choose $0 < \delta \ll 1$ sufficiently small such that
\[
C_{5,\delta} := \pi^3 - \tfrac{2}{3} |b_3| \delta^{3/2} - \tfrac{1}{3} |b_4| \delta^3 \geq \tfrac{1}{3} \pi^3 > 0,\\
\]
and set
\[
C_{6,\delta} := |b_5| + \tfrac{2}{3} \delta^{-3/2} |b_4| + \tfrac{1}{3} |b_3| \delta^{-1/3} > 0.
\]
This implies that
\[
\| u_{yyy} \|_{\Ldper} \leq \frac{1}{C_{5,\delta}} \| \cL_\theta^0 u \|_{\Ldper} + \frac{C_{6,\delta}}{C_{5,\delta}} \| u \|_{\Ldper}.
\]
Substituting into \eqref{estL1theta}, we obtain
\[
\begin{aligned}
\| \cL_\theta^1 u \|_{\Ldper} &\leq \frac{C_{3,\delta}}{C_{5,\delta}}  \| \cL_\theta^0 u \|_{\Ldper}  + \Big( \frac{C_{3,\delta} C_{6,\delta}}{C_{5,\delta}} + C_{4,\delta}\Big) \| u \|_{\Ldper}\\
&= b \| \cL_\theta^0 u \|_{\Ldper} + a \| u \|_{\Ldper},
\end{aligned}
\]
as claimed, with $a = {C_{3,\delta}}/{C_{5,\delta}} > 0$ and $b = C_{4,\delta} + C_{3,\delta} C_{6,\delta} / C_{5,\delta} > 0$. The lemma is proved.
\end{proof}

\subsection{A quick review of perturbation theory for linear operators}

For completeness, in this section we briefly recall the basic perturbation theory for a family of linear operators of the form $\cL(\kappa) = \cL^{(0)} + \kappa \cL^{(1)} + \kappa^2 \cL^{(2)} + \ldots$, where $\kappa$ is a complex parameter. The reader is referred to the classical books by Hislop and Sigal \cite{HiSi96} (chapter 15) and Kato \cite{Kat80} (chapter VII) for more information. We describe the fundamental conditions under which an eigenvalue $\lambda_0$ of $\cL^{(0)}$ persists for $\kappa$ in a neighborhood of the origin.  Let us first recall the definition of an analytic family of type (A) (see \cite{Kat80}, p. 375, or \cite{HiSi96}, \S 15.4).

\begin{definition}
A family of linear closed operators $\cL(\kappa) : X \to Y$, with $X, Y$ Banach spaces, defined for $\kappa$ in an open complex domain $B \subset \C$ containing the origin, is said to be \emph{analytic of type (A)} if the domains are independent of $\kappa$, that is, $D(\cL(\kappa)) \equiv D$ for all $\kappa \in B$, and the mapping $\kappa \mapsto \cL(\kappa) u$ is analytic in $\kappa \in B$ for each $u \in D$. In that case, $\cL(\kappa)$ has a convergent Taylor expansion of the form
\[
\cL(\kappa) u = \cL^{(0)} u + \kappa \cL^{(1)} u + \kappa^2 \cL^{(2)} u + \ldots,
\]
converging in a disk $|\kappa| < R$ inside $B$ independent of $u$ for some radius $R > 0$. Here $\cL(0) = \cL^{(0)}$ and $\cL^{(j)} : D \subset X \to Y$ are linear operators for each $j \in \N$.
\end{definition}

The following result is the general stability theorem for discrete eigenvalues of analytic perturbations of a closed operator. Essentially it says that a discrete eigenvalue splits into several eigenvalue branches under a perturbation. These branches tend to the original discrete eigenvalue when the perturbation parameter goes to zero.

\begin{theorem}
\label{pertthm}
Let $\cL(\kappa)$ be an analytic family of type (A) about $\kappa_0 = 0$. Let $\lambda_0$ be a discrete eigenvalue of $\cL^{(0)} = \cL(0)$. Then there exist families $\lambda_\ell(\kappa)$, $\ell = 1, \ldots, q$, of discrete eigenvalues of $\cL(\kappa)$ such that:
\begin{itemize}
\item[(i)] $\lambda_\ell(0) = \lambda_0$ for all $\ell = 1, \ldots, q,$ and the total algebraic multiplicity of the eigenvalues $\lambda_\ell(\kappa)$, $\ell = 1, \ldots, q,$ is equal to the algebraic multiplicity of $\lambda_0$;
\item[(ii)] each family $\lambda_\ell(\kappa)$ is analytic in $\kappa^{1/p}$ for some positive integer $p \in \Z^+$, that is, each family has a Puiseux series expansion of the form
\[
\lambda_\ell(\kappa) = \lambda_0 + \beta_1 \omega^\ell \kappa^{1/p} + \beta_2 \omega^{2\ell} \kappa^{2/p} + \ldots,
\]
for some $\beta_j \in \C$ and where $\omega = e^{2 \pi i/p}$, converging in a (possibly smaller) disk $|\kappa| < R' \leq R$. In particular, the branches are continuous in $\kappa$ near the origin and $\lambda_\ell(\kappa) \to \lambda_0$ when $\kappa \to 0$.
\end{itemize}
\end{theorem}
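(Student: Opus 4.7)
The plan is to follow the classical Kato approach via Riesz spectral projections, reducing the infinite-dimensional perturbation problem to a holomorphic finite-dimensional matrix whose eigenvalue branches are then analyzed by Puiseux's theorem.

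First, since $\lambda_0$ is a \emph{discrete} eigenvalue of $\cL^{(0)}$, it is isolated with finite algebraic multiplicity $m$. I would choose a positively oriented circle $\Gamma \subset \C$ of sufficiently small radius, centred at $\lambda_0$, enclosing no other point of $\sigma(\cL^{(0)})$. The unperturbed resolvent $R^{(0)}(z) := (\cL^{(0)} - z)^{-1}$ is uniformly bounded for $z \in \Gamma$, and since $\cL(\kappa) - \cL^{(0)} = \kappa \cL^{(1)} + \kappa^2 \cL^{(2)} + \cdots$ in the analytic-type-(A) sense, the factorization
\[
\cL(\kappa) - z = \bigl(I + (\cL(\kappa) - \cL^{(0)}) R^{(0)}(z)\bigr)(\cL^{(0)} - z),
\]
together with a Neumann series argument yields $\Gamma \subset \rho(\cL(\kappa))$ and analyticity of $R(z,\kappa) := (\cL(\kappa) - z)^{-1}$ in $\kappa$, uniformly for $z \in \Gamma$, whenever $|\kappa|$ is small enough.

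Next, I would define the Riesz spectral projection
\[
P(\kappa) := -\frac{1}{2\pi i} \oint_\Gamma R(z,\kappa) \, dz,
\]
which inherits analyticity in $\kappa$ and satisfies $P(\kappa)^2 = P(\kappa)$. Because $P(0)$ projects onto the generalized eigenspace of $\lambda_0$ and has rank $m$, while $\|P(\kappa) - P(0)\| < 1$ for small $|\kappa|$, the projections $P(\kappa)$ are mutually similar, so $\dim P(\kappa)X = m$ throughout. I would then construct an analytic family of isomorphisms $U(\kappa) : P(0)X \to P(\kappa)X$ via the Sz.-Nagy--Kato transformation operator, defined as the solution of $U'(\kappa) = [P'(\kappa),P(\kappa)]\,U(\kappa)$ with $U(0) = I$. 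Pulling back through $U(\kappa)$ gives a basis of $P(\kappa)X$ depending analytically on $\kappa$ in which the restriction $\cL(\kappa)\big|_{P(\kappa)X}$ is represented by an $m \times m$ matrix $M(\kappa)$ with holomorphic entries. The block decomposition $\cL(\kappa) = \cL(\kappa)P(\kappa) \oplus \cL(\kappa)(I-P(\kappa))$ places the entire part of $\sigma(\cL(\kappa))$ inside $\Gamma$ into the first block, so the eigenvalues of $\cL(\kappa)$ inside $\Gamma$, counted with algebraic multiplicity, are exactly those of $M(\kappa)$.

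The problem is now reduced to an algebraic one. The characteristic polynomial
\[
p(\lambda,\kappa) := \det(\lambda I - M(\kappa)) = \lambda^m + c_{m-1}(\kappa)\lambda^{m-1} + \cdots + c_0(\kappa)
\]
has coefficients $c_j(\kappa)$ holomorphic at $\kappa = 0$, and $p(\lambda, 0) = (\lambda - \lambda_0)^m$. I would invoke the Newton--Puiseux theorem: after the shift $\mu = \lambda - \lambda_0$, the zero set of the monic polynomial $\tilde p(\mu,\kappa) := p(\lambda_0 + \mu, \kappa)$ factors into finitely many algebraic cycles, each parameterizable by a convergent Puiseux series $\mu = \beta_1 \omega^\ell \kappa^{1/p} + \beta_2 \omega^{2\ell} \kappa^{2/p} + \cdots$ with $\omega = e^{2\pi i/p}$. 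Gathering the distinct cycles produces the $q$ families $\lambda_\ell(\kappa)$, whose total algebraic multiplicity equals $\deg_\lambda p = m$, and continuity $\lambda_\ell(\kappa) \to \lambda_0$ as $\kappa \to 0$ is immediate from the convergent Puiseux expansions, proving (i)--(ii).

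The main obstacle is the Newton--Puiseux step. The functional-analytic reduction to a holomorphic $m \times m$ matrix $M(\kappa)$ is essentially bookkeeping once one has the Riesz projection machinery in place. The genuine content lies in showing that the roots of an analytic monic polynomial branch as convergent Puiseux series rather than merely continuous multi-valued functions; this requires the Newton-polygon algorithm, with iteration on each factor whenever the discriminant of $p$ vanishes identically at $\kappa = 0$ (corresponding to repeated cycles). The fractional exponents $p \geq 2$ are genuinely necessary: already perturbing a single $m \times m$ Jordan block by a generic small matrix produces a Puiseux cycle of length $m$, so the fractional-power structure cannot in general be replaced by ordinary analytic branches.
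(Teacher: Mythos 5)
Your proposal is correct and follows exactly the standard argument (Riesz spectral projection, Sz.-Nagy--Kato reduction to a holomorphic $m\times m$ matrix, then Newton--Puiseux on the characteristic polynomial) that underlies Theorem 15.11 of Hislop--Sigal, which the paper simply cites without reproducing the proof; your closing observation that the branches are continuous at $\kappa=0$ because the Puiseux expansions have no negative fractional powers is precisely the point the paper flags with its reference to Knopp.
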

\begin{proof}
See Theorem 15.11, p. 155 in \cite{HiSi96}. There are no negative powers of $\kappa^{1/p}$ so that the branches are continuous at the origin, see Knopp \cite{Knopp}.
\end{proof}

In order to apply the previous perturbation result, let us first examine the perturbed spectral problem \eqref{pertsp} for the Bloch operator with $\theta = 0$, namely, to
\begin{equation}
\label{pertsp2}
\widetilde{\lambda} w = (\cL_0^0 + \vep \cL_0^1) w, \quad \qquad w \in \Htper([0,\pi]),
\end{equation}
and consider its complexification:
\begin{equation}
\label{compf}
\left\{
\begin{aligned}
\widetilde{\cL}_0(\kappa) &:= \cL_0^0 + \kappa \cL_0^1,\\
D(\widetilde{\cL}_0(\kappa)) &= \Htper([0,\pi]), \\
\widetilde{\cL}_0(\kappa) &: \Ldper([0,\pi]) \to \Ldper([0,\pi]),
\end{aligned}
\right.
\end{equation}
where $\kappa \in B_{\hat{\vep}} = \{\kappa \in \C \, : \, |\kappa| < \hat{\vep}\}$, for some $\hat{\vep} > 0$ to be determined. That is, we extend the perturbed spectral problem \eqref{pertsp2} to a complex open neighborhood of the origin. The family of operators in \eqref{compf} has a common domain, $D:= D(\widetilde{\cL}_0(\kappa)) \equiv \Htper([0,\pi])$, which is independent of $\kappa \in B_{\hat{\vep}}$. Notice that the Bloch operators for $\theta = 0$ are given by
\begin{equation}
\label{defL00}
\left\{
\begin{aligned}
\cL_0^0 &: \Ldper([0,\pi]) \to \Ldper([0,\pi]), \\
D(\cL_0^0) &= \Htper([0,\pi]),\\
\cL_0^0 &= - \pi^3 \partial_y^3 + L_0 \pi^2 \partial_y^2 + L_0^2 c_0 \pi \partial_y + L_0^3 r \, \Id,
\end{aligned}
\right.
\end{equation}
and
\begin{equation}
\label{defL10}
\left\{
\begin{aligned}
\cL_0^1 &: \Ldper([0,\pi]) \to \Ldper([0,\pi]), \\
D(\cL_0^1) &= \Hdper([0,\pi]),\\
\cL_0^1 &= b_2 \pi^2 \partial_y^2 + b_1(y) \pi \partial_y + b_0(y) \, \Id.
\end{aligned}
\right.
\end{equation}

We start by making a straightforward observation.
\begin{lemma}
\label{lemdos}
The Bloch operator $\cL_0^0$ has a real positive discrete eigenvalue, $\widetilde{\lambda}_0 = r L_0^3 > 0$, which is associated to the constant eigenfunction $w_0(y) \equiv 1/ \sqrt{\pi} \in \Htper([0,\pi])$, with $\|w_0\|_{\Ldper} = 1$. 
\end{lemma}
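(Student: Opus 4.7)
The plan is to verify the claim by direct substitution into the definition of $\cL_0^0$, together with an appeal to the general fact (already invoked in Section \ref{secFloqsp}) that the spectrum of each Bloch operator consists entirely of discrete eigenvalues.

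First, I would take the constant function $w_0(y) \equiv 1/\sqrt{\pi}$, which manifestly belongs to $\Htper([0,\pi]) = D(\cL_0^0)$, and compute its $\Ldper$-norm as $\|w_0\|_{\Ldper}^2 = \int_0^\pi (1/\pi)\,dy = 1$. Next, observing that $\partial_y^j w_0 = 0$ for every $j \geq 1$, substitution into the expression \eqref{defL00} for $\cL_0^0$ immediately collapses the third-, second- and first-order terms, leaving only
\[
\cL_0^0 w_0 = L_0^3 r \, w_0,
\]
so that $\widetilde{\lambda}_0 := r L_0^3$ is an eigenvalue with eigenfunction $w_0$. Since $L_0 = 2\pi/\sqrt{r} > 0$ and $r > 0$, clearly $\widetilde{\lambda}_0 > 0$.

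To conclude that $\widetilde{\lambda}_0$ is a \emph{discrete} eigenvalue (and not merely an eigenvalue), I would invoke the fact that $D(\cL_0^0) = \Htper([0,\pi])$ is compactly embedded in $\Ldper([0,\pi])$ by the Rellich--Kondrachov theorem, so that $\cL_0^0$ has compact resolvent and its spectrum consists entirely of isolated eigenvalues of finite algebraic multiplicity, as was already noted for the family $\{\cL_\theta^\ep\}$ right after \eqref{defBlochops}. Thus $\widetilde{\lambda}_0 \in \ptsp(\cL_0^0)_{|\Ldper}$ is discrete, which completes the proof.

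There is essentially no obstacle here: the computation is a single substitution, and the discreteness statement is inherited from the general theory already set up in the paper. The only very minor thing worth emphasizing is that the argument says nothing yet about the algebraic multiplicity of $\widetilde{\lambda}_0$; this will presumably be addressed in a subsequent lemma, since multiplicity information is crucial for applying Theorem \ref{pertthm} to track the perturbed eigenvalue branches.
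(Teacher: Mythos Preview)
Your proof is correct and follows essentially the same approach as the paper's own proof: direct substitution of the constant function into $\cL_0^0$ together with an appeal to the already-established discreteness of the Bloch spectra. As a minor aside, the paper does not in fact pin down the multiplicity of $\widetilde{\lambda}_0$ later on; instead, Theorem \ref{pertthm} is applied in a form that accommodates arbitrary finite multiplicity via Puiseux expansions, so no separate multiplicity lemma is needed.
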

\begin{proof}
Follows from a direct computation, as $\cL_0^0 w_0 = r L_0^3 w_0$. Moreover, clearly $w_0 \in \Htper([0,\pi])$ and $\| w_0 \|_{\Ldper}^2 = \int_0^\pi |w_0(y)|^2 \, dy = 1$. As we have already mentioned, all eigenvalues of the Bloch operators \eqref{defBlochops} are discrete. The lemma is proved.
\end{proof}

Next, we show that the family $\widetilde{\cL}_0(\kappa)$ is an analytic family of type (A). This is a direct consequence of $\cL_0^1$ being $\cL_0^0$-bounded.
\begin{lemma}
\label{lemtres}
There exists $\hat{\vep} > 0$ such that the family of operators \eqref{compf} is an analytic family of type (A) for $\kappa \in B_{\hat{\vep}}$.
\end{lemma}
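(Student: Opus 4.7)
The plan is to verify the two defining conditions of analyticity of type (A)---namely, a $\kappa$-independent domain on which each $\widetilde{\cL}_0(\kappa)$ is closed, and pointwise analyticity in $\kappa$---by invoking the relative boundedness established in Lemma \ref{lemuno} together with a standard stability-of-closedness result for relatively bounded perturbations (see Kato \cite{Kat80}, Chapter IV).

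First, I would record that $\cL_0^0$ is closed and densely defined on $\Ldper([0,\pi])$ with domain $D := \Htper([0,\pi])$; this follows from its being a constant-coefficient differential operator of third order between periodic Sobolev spaces, by the same argument (via Lemma 3.1.2 of \cite{KaPro13}) already used for $\cL^\ep$ in the discussion preceding Definition \ref{defspectstab}. Next, I would specialize Lemma \ref{lemuno} to $\theta=0$ to obtain non-negative constants $a, b$ with $\|\cL_0^1 u\|_{\Ldper} \leq a\|u\|_{\Ldper} + b\|\cL_0^0 u\|_{\Ldper}$ for every $u \in D$. A quick inspection of the proof of Lemma \ref{lemuno} shows that $a$ and $b$ depend only on the uniform $L^\infty$ bounds of $b_0, b_1, b_2$, and hence can be taken independent of $\ep$.

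Then I would choose $\hat\vep \in (0, 1/b)$, so that $|\kappa| b < 1$ for every $\kappa$ in the open disk $B_{\hat\vep}$. By the stability theorem for closed operators under relatively bounded perturbations (\cite{Kat80}, Theorem IV.1.1), this guarantees that $\widetilde{\cL}_0(\kappa) = \cL_0^0 + \kappa \cL_0^1$ is closed on the common domain $D$, independently of $\kappa \in B_{\hat\vep}$, as required.

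Finally, the analyticity condition is immediate: for each fixed $u \in D$, the map $\kappa \mapsto \widetilde{\cL}_0(\kappa) u = \cL_0^0 u + \kappa \cL_0^1 u$ is an affine (hence entire) $\Ldper([0,\pi])$-valued function of $\kappa$; equivalently, the Taylor expansion in the definition of type (A) terminates at first order, with $\widetilde{\cL}^{(0)} = \cL_0^0$, $\widetilde{\cL}^{(1)} = \cL_0^1$, and $\widetilde{\cL}^{(j)} \equiv 0$ for $j \geq 2$. No serious obstacle is expected: all the real work has already been carried out in Lemma \ref{lemuno}, and the rest is a verbatim application of Kato's stability framework.
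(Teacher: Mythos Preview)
Your proposal is correct and takes essentially the same approach as the paper: both rely on the relative boundedness from Lemma \ref{lemuno} (specialized to $\theta=0$) and then appeal to Kato's framework, with the same choice $\hat\vep < b^{-1}$. The only cosmetic difference is that the paper invokes Kato's packaged criterion (Theorem VII-2.6 and Remark VII-2.7) directly, whereas you unpack it by verifying closedness via Theorem IV.1.1 and analyticity via affinity of $\kappa \mapsto \cL_0^0 u + \kappa \cL_0^1 u$; your added remark that $a,b$ may be taken uniform in $\ep$ is a useful refinement.
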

\begin{proof}
From Lemma \ref{lemuno}, in the particular case with $\theta  = 0$ we already know that the operator $\cL_0^1$ is relatively bounded with respect to $\cL_0^0$ on the space $\Ldper([0,\pi])$. Therefore, there exist $a, b > 0$ such that
\[
\| \cL_0^1 u \|_{\Ldper} \leq a \| u \|_{\Ldper} + b \| \cL_0^0 u \|_{\Ldper},
\]
for all $u \in D(\cL_0^0) = \Htper([0,\pi]) \subset D(\cL_0^1) = \Hdper([0,\pi])$. Thus, we can apply Theorem VII-2.6 and Remark VII-2.7 in Kato \cite{Kat80}, pp. 377-378, to conclude that there exists a radius $\hat{\vep} > 0$ such that the family $\widetilde{\cL}_0(\kappa) = \cL_0^0 + \kappa \cL_0^1$ is analytic of type (A) for $|\kappa| < \hat{\vep}$ (in fact, $\hat{\vep} < b^{-1}$). 
\end{proof}

After these preparations, we are able to apply perturbation theory for the family of linear operators.

\begin{lemma}
\label{lemcuatro}
There exist $\vep_1 > 0$ and families of discrete eigenvalues, $\widetilde{\lambda}_\ell(\kappa)$, $\ell =1, \ldots, q$, of the operators $\widetilde{\cL}_0(\kappa) = \cL_0^0 + \kappa \cL_0^1$ for $|\kappa| < \vep_1$ such that $\widetilde{\lambda}_\ell(0) = \widetilde{\lambda}_0 = r L_0^3 > 0$. Moreover, for each $0 < \vep < \vep_1$ and $|\kappa|< \vep$ there holds
\begin{equation}
\label{intersecti}
\ptsp(\cL_0^0 + \kappa \cL_0^1)_{|\Ldper} \cap \{ \lambda \in \C \, : \, |\lambda - r L_0^3| < \zeta(\vep)\} \neq \varnothing,
\end{equation}
for some radius $0 < \zeta(\vep) = O(\vep^{1/p})$ and some $p \in \Z_+$.
\end{lemma}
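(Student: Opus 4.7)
\medskip

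The plan is to apply the general perturbation result, Theorem \ref{pertthm}, directly to the analytic family $\widetilde{\cL}_0(\kappa) = \cL_0^0 + \kappa \cL_0^1$ at the distinguished eigenvalue $\widetilde{\lambda}_0 = rL_0^3$ identified in Lemma \ref{lemdos}. The preparatory work has essentially been done: Lemma \ref{lemdos} guarantees that $\widetilde{\lambda}_0$ is a discrete eigenvalue (with finite algebraic multiplicity) of $\cL_0^0 = \widetilde{\cL}_0(0)$, and Lemma \ref{lemtres} guarantees, via the $\cL_0^0$-boundedness of $\cL_0^1$ established in Lemma \ref{lemuno}, that $\widetilde{\cL}_0(\kappa)$ is an analytic family of type (A) on some disk $|\kappa| < \hat{\vep}$. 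These are exactly the hypotheses of Theorem \ref{pertthm}.

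Applying Theorem \ref{pertthm} at $\widetilde{\lambda}_0 = rL_0^3$ yields a finite collection of analytic branches $\widetilde{\lambda}_\ell(\kappa)$, $\ell = 1,\dots,q$, of discrete eigenvalues of $\widetilde{\cL}_0(\kappa)$, each satisfying $\widetilde{\lambda}_\ell(0) = \widetilde{\lambda}_0$, and each admitting a Puiseux expansion
\[
\widetilde{\lambda}_\ell(\kappa) = rL_0^3 + \beta_1 \omega^\ell \kappa^{1/p} + \beta_2 \omega^{2\ell}\kappa^{2/p} + \cdots,
\]
with $\omega = e^{2\pi i/p}$, convergent on some disk $|\kappa| < R' \leq \hat{\vep}$. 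Setting $\vep_1 := \min\{\hat\vep, R'\}$ (or any smaller positive number on which the expansion is well controlled) provides the desired radius.

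To obtain the quantitative conclusion \eqref{intersecti}, fix $\vep \in (0,\vep_1)$ and $\kappa \in \C$ with $|\kappa| < \vep$. Truncating the Puiseux series of $\widetilde{\lambda}_1(\kappa)$ at first order gives the bound
\[
\bigl|\widetilde{\lambda}_1(\kappa) - rL_0^3\bigr| \leq |\beta_1|\, |\kappa|^{1/p} + C_*|\kappa|^{2/p} \leq M\,\vep^{1/p},
\]
for some uniform constant $M > 0$ depending only on the tail of the series on $|\kappa| < \vep_1$. Setting $\zeta(\vep) := 2M\vep^{1/p}$, say, ensures $\widetilde{\lambda}_1(\kappa) \in \ptsp(\widetilde{\cL}_0(\kappa))_{|\Ldper}$ lies strictly inside the disk $\{|\lambda - rL_0^3| < \zeta(\vep)\}$, proving non-emptiness of the intersection in \eqref{intersecti}.

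I do not anticipate a serious obstacle: the bulk of the work is already contained in Lemma \ref{lemuno} (relative boundedness) and Lemma \ref{lemtres} (analyticity of type (A)). The only minor subtlety is checking that the integer $p$ and the constants in the Puiseux tail are uniform on a fixed disk so that the $O(\vep^{1/p})$ bound on $\zeta(\vep)$ is legitimate; this follows from the convergence of the Puiseux expansion on $|\kappa| < R'$ guaranteed by Theorem \ref{pertthm}, which in turn follows from Kato's construction and the reference to Knopp for the absence of negative powers.
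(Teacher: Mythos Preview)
Your proposal is correct and follows essentially the same approach as the paper: invoke Lemmata \ref{lemdos} and \ref{lemtres} to verify the hypotheses of Theorem \ref{pertthm}, obtain the Puiseux branches $\widetilde{\lambda}_\ell(\kappa)$ with $\widetilde{\lambda}_\ell(0)=rL_0^3$, and then read off \eqref{intersecti} from the bound $|\widetilde{\lambda}_\ell(\kappa)-\widetilde{\lambda}_0|\leq C|\kappa|^{1/p}$ coming from the convergent expansion. Your version is slightly more explicit in defining $\vep_1$ and $\zeta(\vep)$, but the argument is the same.
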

\begin{proof}
From Lemmata \ref{lemtres} and \ref{lemdos} we know that $\cL_0^0 + \kappa \cL_0^1$ is an analytic family of type (A) for $|\kappa| < \hat{\vep}$ and that $\widetilde{\lambda}_0 = r L_0^3 > 0$ is a discrete eigenvalue of $\cL_0^0$. Henceforth, we apply Theorem \ref{pertthm} to conclude that there exist families of discrete eigenvalues, $\widetilde{\lambda}_\ell(\kappa)$, $\ell =1, \ldots, q$, of the operators $\widetilde{\cL}_0(\kappa) = \cL_0^0 + \kappa \cL_0^1$, for $|\kappa| < \hat{\vep}$ such that $\widetilde{\lambda}_\ell(0) = \widetilde{\lambda}_0 = r L_0^3 > 0$ and the families have Puiseux series expansions of the form
\[
\widetilde{\lambda}_\ell(\kappa) = \widetilde{\lambda}_0 + \beta_1 \omega^\ell \kappa^{1/p} + \beta_2 \omega^{2\ell} \kappa^{2/p} + \ldots,
\]
with $\omega = e^{2 \pi i/p}$ for some $p \in \Z$, $p > 0$, converging in a (possibly smaller) disk $|\kappa| < \vep_1 \leq \hat{\vep}$. Each $\widetilde{\lambda}_\ell(\kappa)$ is a discrete eigenvalue of $\widetilde{\cL}_0(\kappa) = \cL_0^0 + \kappa \cL_0^1$ with $\widetilde{\lambda}_\ell(0) = \widetilde{\lambda}_0 = r L_0^3 > 0$ for each $\ell$. Property \eqref{intersecti} follows from continuity of the expansion and that $|\widetilde{\lambda}_\ell(\kappa) - \widetilde{\lambda}_0| \leq C |\kappa|^{1/p}$ for $|\kappa|$ small. This proves the result.
\end{proof}

We now prove the main result of the paper.

\subsection{Proof of Theorem \ref{mainthm}}
In view that $\vep = \sqrt{\ep}$, let us choose $0 < \ep_1 := \min \{\ep_0, \vep_1^2\}$. Hence, from Lemma \ref{lemcuatro} we conclude that there exists $\theta_0 = 0 \in (-\pi,\pi]$ for which the spectrum of the perturbed Bloch operator with $\theta = 0$ and $\ep \in (0, \ep_1)$ intersects the unstable half plane (in view that $\widetilde{\lambda}_0 =  r L_0^3> 0$):
\[
\ptsp(\cL_{0}^\ep)_{|\Ldper} \cap \{ \lambda \in \C \, : \, \Re \lambda > 0\} \neq \varnothing.
\]
Therefore, from the spectral instability criterion stated in Proposition \ref{propcrit} we obtain the result. Notice that if we vary $\theta \approx 0$ (within a small neighborhood of the origin), we obtain actual curves of Floquet spectrum that remain in a neighborhood of the unstable half plane. This completes the proof of Theorem \ref{mainthm}. 
\qed

\section{Discussion}

In this paper, we have proved the existence of bounded periodic waves for the KdVBF equation \eqref{KdVBFn}. For that purpose we have verified that, for any fixed positive values of the physical parameters $\alpha > 0$ and $r > 0$, a family of small-amplitude and finite period waves emerges from a subcritical local Hopf bifurcation around a critical value of the wave speed given by $c_0 = -r$. To illustrate the emergence of the waves we conducted numerical approximations of both the waves and of the flow in phase space for fixed parameter values. In addition, we studied the stability of the family of periodic waves as solutions to the PDE. In particular, we proved that these waves are spectrally unstable or, in other words, that the Floquet spectrum of the linearized operator intersects the unstable complex half plane. Heuristically, this instability result has a direct interpretation: the small amplitude waves shrink to the zero equilibrium solution when the small parameter tends to zero, $\ep \to 0$, and the linearized operator around the wave tends (formally) to a constant coefficient linearized Bloch operator around the origin. The latter has a spectrum determined by a dispersion relation that intersects the unstable half plane thanks to the positive sign of the reaction term at the rest state. We then invoke the classical perturbation theory for linear operators to conclude. 

This perturbative technique has been recently applied to study stability of small-amplitude waves for scalar viscous balance laws \cite{AlPl21}, scalar Hamiltonian equations \cite{KDT19}, hyperbolic systems of balance laws \cite{AMP22} and scalar reaction-diffusion equations coupled to one ordinary differential equation \cite{CheDu22}. This appears to be a general behavior and the method could be used to study the instability of small periodic waves arising in other contexts. In a recent contribution, Chen and Duan \cite{CheDu23} found the conditions (in a unified framework) to prove the perturbation results on the related spectra for a large class of second order differential operators with asymptotically constant coefficients. Their result does not apply, however, to the KdVBF equation, which is of third order. From a technical viewpoint, it is to be noticed that we did not prove that the unstable eigenvalue of the unperturbed operator is simple or non-degenerate. Nonetheless, we directly applied perturbation theory for a higher multiplicity eigenvalue for which the perturbation eigenvalue branches can be represented by a Pusieux series which remain continuous in the limit. This allows us to conclude the result. This feature exhibits the flexibility of perturbation theory of linear operators as a technique to examine instabilities of approximated waves in general situations. Finally, the instability results of the present study may serve as the theoretical basis to demonstrate the orbital (nonlinear) instability of the waves under consideration, as it happens in many other physical systems and model equations such as the KdV equation \cite{LopO02}, viscous balance laws \cite{AAP24}, the critical KdV and NLS models \cite{AngNat09}, KdV systems \cite{AnLN08}, and for general dispersive models \cite{AngNat16}, just to mention a few. 

\section*{Acknowledgements}

The authors are grateful to two anonymous reviewers, whose comments, suggestions and careful revisions greatly improved the quality of the paper. The authors also thank Renato Calleja and Carlos Garc\'{\i}a-Azpeitia for useful conversations about Hopf bifurcation. The work of R. Folino was partially supported by DGAPA-UNAM, program PAPIIT, grant IA-102423. A. Naumkina was fully supported by CONAHCyT (Mexico), through a scholarship for graduate studies (CVU 1081529). The work of R. G. Plaza was partially supported by DGAPA-UNAM, program PAPIIT, grant IN-104922.

\appendix
\section{Calculation of the first Lyapunov coefficient}\label{apendice}

In this section we compute the first Lyapunov coefficient for the linearization of system \eqref{firstordsyst},
\[
\Phi' = F(\Phi) = \begin{pmatrix} \phi_2 \\ \phi_3 \\ \phi_3 + r\phi_1(1-\phi_1) - \alpha \phi_1 \phi_2 + c \phi_2 \end{pmatrix} =: \begin{pmatrix} F_1(\Phi) \\F_2(\Phi) \\ F_3(\Phi) \end{pmatrix},
\]
evaluated at the equilibrium point $\Phi = P_0 = (0,0,0)$ and at the critical value of the bifurcation parameter, $c = c_0 = -r$.

For large systems in dimension $n > 2$ the calculation of the first Lyapunov coefficient, denoted as $a_0 = \ell_1(0)$, can be quite cumbersome. Yet, in our case we use an expression of $\ell_1(0)$ which does not require to compute the normal form of the system nor to transform it into its eigenbasis, and which expresses the coefficient in terms of the original field and its derivatives (see Kuznetsov \cite{Kuz23-4e}, formula (5.34), p. 197):
\begin{equation}
\label{forlyap}
\begin{aligned}
a_0 = \ell_1(0) = \frac{1}{2 \omega_0} \Re \Big[ &\big\langle p, C(q,q,\bq) \big\rangle - 2 \big\langle p, B(q, A_0^{-1} B(q, \bq)) \big \rangle + \\
& + \big\langle p, B(\bq, (i2\omega_0 I - A_0)^{-1} B(q,q)) \big\rangle \Big],
\end{aligned}
\end{equation}
where $\langle p, q \rangle = \sum_{j=1}^3 \bp_j q_j$ is the standard scalar product in $\C^3$ (linear with respect to the the second argument), $\omega_0 = \lambda_{2,3}(c_0) = \rr$,
\[
A_0 = (D_{\Phi} F)_{|(P_0,c_0)} = \begin{pmatrix} 0 & 1 & 0 \\ 0 & 0 & 1 \\ r & -r  & 1 \end{pmatrix},
\]
$q \in \C^3$ is the complex eigenvector of $A_0$ corresponding to the eigenvalue $\lambda_2(c_0) = i \omega_0 = i \rr$, $p \in \C^3$ is the adjoint eigenvector with $A_0^\top p = - i \omega_0 p$, and normalized such that $\langle p, q \rangle = 1$. Finally, for any $x, y, z \in \C^3$, $B = B(x,y) \in \C^3$ and $C = C(x,y,z) \in \C^3$ are the multilinear functions defined as
\[
\begin{aligned}
B_i(x,y) = \sum_{j,k = 1}^3 \big( \partial_{jk}^2 F_i \big)_{|P_0} x_j y_k, \qquad C_i(x,y,z) = \sum_{j,k,l = 1}^3 \big( \partial_{jkl}^3 F_i \big)_{|P_0} x_j y_k z_l,
\end{aligned}
\]
for $1 \leq i \leq 3$.

By inspection, it is clear that $\partial_{jk}^2 F_i = 0$ and $\partial_{jkl}^3 F_i = 0$ for all $1 \leq j,k,l \leq 3$ and for $i = 1,2$. Moreover, $\partial_{13}^2 F_3=\partial_{22}^2 F_3 = \partial_{23}^2 F_3 = \partial_{31}^2 F_3=\partial_{32}^2 F_3=\partial_{33}^2 F_3 = 0$ and the only surviving higher order derivatives are
\[
\partial_{11}^2 F_3 = - 2r, \quad \partial_{12}^2 F_3 = \partial_{21}^2 F_3 = - \alpha.
\]
This implies that for any $x,y,z \in \C^3$, $C_i(x,y,z) \equiv 0$, for all $1 \leq i \leq 3$, while $B_i(x,y) = 0$, for $i = 1,2$ and the only surviving term is
\[
B_3(x,y) = - 2r x_1 y_1 - \alpha x_1 y_2 - \alpha x_2 y_1.
\]
It is not hard to verify that the inverse of $A_0$ is
\[
A_0^{-1} = \begin{pmatrix} 1 & -1/r & 1/r \\ 1 & 0 & 0 \\ 0 & 1 & 0 \end{pmatrix},
\]
and that the normalized eigenvectors are
\[
q = \begin{pmatrix} 1 \\ i \rr \\ -r \end{pmatrix}, \qquad p = \frac{1}{2 \rr (1 + i \rr) } \begin{pmatrix} \rr \\ - \rr + i \\ -i  \end{pmatrix}.
\]
As a consequence, $B_3(q, \bq) = -2r$ and $B_3(q,q) = -2(r + i \alpha \rr)$. Denoting 
\[
s := A_0^{-1} B(q, \bq) = \begin{pmatrix} 1 & -1/r & 1/r \\ 1 & 0 & 0 \\ 0 & 1 & 0 \end{pmatrix} \begin{pmatrix} 0 \\ 0 \\ B_3(q, \bq) \end{pmatrix} = 
\begin{pmatrix} -2 \\ 0 \\ 0  \end{pmatrix},
\]
we obtain $B_3(q, s) = 4r + i 2\alpha \rr$ and, after some algebra, we arrive at
\[
\begin{aligned}
- 2 \big\langle p, B(q, A_0^{-1} B(q, \bq)) \big \rangle &= -2 \langle p, B(q,s) \rangle = - 2 \bp_3 B_3(q,s) \\
&= (1+r)^{-1} \big[ 2(2r+ \alpha) -i 2 \rr(2-\alpha)\big].
\end{aligned}
\]

Therefore, we denote
\begin{equation}
\label{launo}
\varrho_1 := \Re \Big[ - 2 \big\langle p, B(q, A_0^{-1} B(q, \bq)) \big \rangle \Big] = \frac{2(2r+\alpha)}{1+r}.
\end{equation}

Computing further, we observe that
\[
\begin{aligned}
J &:= i2\omega_0 I - A_0 = i2 \rr I - A_0 =\begin{pmatrix} i2\rr & -1 & 0 \\ 0& i2\rr & -1 \\ -r & r & i2 \rr -1 \end{pmatrix},\\
J^{-1} &= \big( 3r (1 - i2 \rr)\big)^{-1} \begin{pmatrix} -3r -i 2\rr & -1 + i2 \rr & 1 \\ r & -4r -i2\rr & i2\rr \\ i2r\rr & r - i 2r \rr & -4r
\end{pmatrix},
\end{aligned}
\]
so that, after some algebraic calculations, we arrive at
\[
m:= J^{-1} B(q,q) = J^{-1} \begin{pmatrix} 0 \\ 0 \\ B_3(q,q) \end{pmatrix} = \Theta \begin{pmatrix} 1 \\ i2 \rr \\ -4r \end{pmatrix},
\]
with
\[
\Theta:= - \frac{2}{3r} \Big( \frac{r + i \alpha \rr}{1 - i 2 \rr}\Big)  = - \frac{2}{3\rr} (1+4r)^{-1} \big( \rr(1-2\alpha) + i(\alpha + 2r)\big).
\]
Whence,
\[
B_3(\bq,m) = -2r \bq_1 m_1 - \alpha \bq_1 m_2 - \alpha \bq_2 m_1 = - (2r + i \alpha \rr) \Theta,
\]
and we can compute
\[
\big\langle p, B(\bq, (i2\omega_0 I - A_0)^{-1} B(q,q)) \big\rangle = \langle p, B(\bq,m) \rangle = \bp_3 B_3(\bq,m) = \frac{- i (2r + i \alpha \rr) \Theta}{2 \rr (1-i \rr)},
\]
yielding, after some algebra,
\begin{equation}
\label{lados}
\begin{aligned}
\varrho_2 &:= \Re \big\langle p, B(\bq, (i2\omega_0 I - A_0)^{-1} B(q,q)) \big\rangle \\
&= \frac{\tfrac{1}{2}\alpha + r}{1+r} \Re \Theta - \frac{(\tfrac{1}{2}\alpha -1)\rr}{1+r} \Im \Theta \\
&=\frac{2(\alpha-1)(\tfrac{1}{2} \alpha + r)}{(1+r)(1 + 4r)}.
\end{aligned}
\end{equation}

In view that $C(q,q,\bq) = 0$, substitution of \eqref{launo} and \eqref{lados} into \eqref{forlyap} yields
\[
\ell_1(0) = \frac{1}{2 \rr} (\varrho_1 + \varrho_2) = \frac{(\tfrac{1}{2}\alpha + 1)(1 + \alpha + 8r)}{(1+r)(1+4r)} > 0.
\]

\def\cprime{$'\!\!$} \def\cprimel{$'\!$}





\end{document}